\newtheorem{Lemma}{Lemma}
\newtheorem{Theorem}[Lemma]{Theorem}
\newtheorem{Proposition}[Lemma]{Proposition}
\newtheorem{Remark}[Lemma]{Remark}
\newtheorem{Example}{Example}
\newcommand{\M}{\mathbb{M}}
\newcommand{\R}{\mathbb{R}}
\newcommand{\N}{\mathbb{N}}
\newcommand{\X}{\mathbb{X}}
\newcommand{\limm}{\underset{n \rightarrow \infty}{\lim}}
\renewcommand{\P}{\mathbb{P}}
\newcommand{\E}{\mathbb{E}}
\renewcommand{\1}{\mathbbm{1}}
\renewcommand{\d}{\mathrm{d}}
\newcommand{\xxi}{\boldsymbol{\xi}}
\newcommand{\mxi}{m_{\xi}}
\newcommand{\mxii}{m_{\xi'}}
\newcommand{\meta}{m_{\eta}}
\newcommand{\eeta}{\boldsymbol{\eta}}
\renewcommand{\th}{\mathrm{th}}
\newcommand{\lgcp}{\mathrm{LGCP}}
\newcommand{\lf}{\mathrm{lf}}
\begin{document}
\title{On a generalization of Matérn hard-core processes with applications to max-stable processes}

\author{M.~Dirrler\footnote{Universit\"at Mannheim, A5,6 68161 Mannheim, Germany, Email address: mdirrler@mail.uni-mannheim.de},\, 
	M.~Schlather\footnote{Universit\"at Mannheim, A5,6 68161 Mannheim, Germany, Email address: schlather@math.uni-mannheim.de}}
\maketitle

\thispagestyle{empty}

\begin{abstract}
The Matérn hard-core processes are classical examples for point process models obtained from (marked) Poisson point processes. Points of the original Poisson process are deleted according to a dependent thinning rule, resulting in a process whose points have a prescribed hard-core distance.
We present a new model which encompasses recent approaches. It generalizes the underlying point process, the thinning rule and the marks attached to the original process. The new model further reveals several connections to mixed moving maxima processes, e.g.\ a process of visible storm centres.
\end{abstract}

{\small
	\noindent \textit{Keywords}: {Cox extremal process; dependent thinning; Matérn hard-core process; log Gaussian Cox process
	}\\
	\noindent \textit{2010 MSC}: {Primary 60G55} \\
	\phantom{\textit{2010 MSC}:} {Secondary 86A32, 60G70} }

\section{Introduction}

Point process models obtained by dependent thinning of homogeneous Poisson point processes have been extensively examined during the last decades. The Matérn hard-core processes \citep{matern1960hardcore} are classical examples for such processes, where the thinning probability of an individual point depends on the other points of the original point pattern. The Matérn models and slight modifications of them are applied to real data in various branches, for instance ecological science \citep{stoyan87materngeneral,picard2005}, geographical analysis \citep{stoyan88desertedvillages} and computer science \citep{ibrahim}.

There already exist several extensions of Matérns models \citep{kuronen2013}, concerning the hard-core distance \citep{stoyan85onmatern,mansson2002maternconvexgrains}, the thinning rule \citep{teichmann2013hardcore} or the generalization the underlying Poisson process \citep{andersen2015coxhardcore}.
We present a new model which encompasses all these approaches and further generalizes the underlying point process, the thinning rule and the marks attached to the original process.\\
In Section~\ref{sec:prelim}, we shortly review the Matérn hard-core process and state details on Palm calculus which will be used throughout the paper. Our general model is defined in Section~\ref{sec:general_model}. We restrict the underlying ground process to a log Gaussian Cox process in Section~\ref{sec:coxmodel} and calculate first and second order properties for this model. In Section~\ref{sec:appl_m3}, we establish a connection between our model and mixed moving maxima (M3) processes. We prove that a process based on our model is in the max-domain of attraction of an M3 process and further calculate first and second order properties of a point process which can be recovered from an observation of the M3 process itself.

\section{Preliminaries}\label{sec:prelim}
In this paper, we regard a point process as random countable subset of a complete separable metric space $S$. For $\varphi\subset S$ we denote by $n(\varphi)$ the cardinality of $\varphi$. We define the space of locally finite subsets of $S$ by
$$N_{\lf}=\{\varphi\subset S: n(\varphi\cap B)<\infty,\ \text{for all bounded } B\subset S\}$$
and the corresponding $\sigma$-algebra
$$\mathcal{N}_{\mathrm{lf}}=\sigma(\{\varphi\in N_{\mathrm{lf}}:n(\varphi\cap B)=m\}:\ B\subset S\ \text{bounded and } m\in\N).$$
A point process $\Phi$ is a random variable taking values in $(N_{\lf},\mathcal{N}_{\mathrm{lf}})$.

\vspace{2mm}

\textbf{Matérn hard-core processes and Palm calculus.} 
Matérn introduced point process models which are obtained from a stationary Poisson process $\Phi$ on $S=\R^d$ with intensity $\lambda$, by a dependent thinning method \citep{matern1960hardcore}. In the Matérn I model, all points $\xi\in\Phi$ that have neighbours within a deterministic hard-core distance $R$ are deleted. The Matérn II model considers a marked point process $\Phi_M$ where each point $\xi\in\Phi$ is independently endowed with a random mark $m_{\xi}\sim \mathcal{U}[0,1]$. A point $(\xi,\mxi)\in \Phi_M$ is retained in the thinned process $\Phi_{\th}$ if the sphere $B_R(\xi)$ contains no points $\xi'\in\Phi\setminus\{\xi\}$ with $m_{\xi'}<m_{\xi}$. That is,
\begin{align*}
\Phi_{\mathrm{th}}&=\{(\xi,\mxi)\in\Phi_M:m_{\xi}<m_{\xi'},\ \forall \xi'\in\Phi\cap B_R(\xi)\setminus\{\xi\}\}\\
&=\{(\xi,\mxi)\in\Phi_M:f_{\mathrm{th}}(\Phi_M;\xi,m_{\xi})=1\}
\end{align*}
with the \textit{thinning function}
$$f_{\mathrm{th}}(\Phi_M;\xi,m_{\xi})=\prod_{(\xi',m_{\xi'})\in \Phi_M\setminus\{(\xi,m_{\xi})\}}(1-\1_{\xi'\in B_R(\xi)}\1_{m_{\xi'}<m_{\xi}}).$$
It is of particular interest to compute the probability that a given point $(\xi,\mxi)\in\Phi_M$ is retained in the thinned process. This probability can be calculated using Palm calculus \citep{mecke1967,moeller2004ppbook, daley2008pp,chiu_stoyan}, which we briefly summarize in the following. 

The \textit{reduced Campbell measure} $C^!$ for a point process $\Phi$ on $S$ is a measure on $S\times N_{\mathrm{lf}}$ defined by
$$C^!(D)=\E\sum_{\xi\in\Phi}\1_{(\xi,\Phi\setminus{\xi})\in D},\quad D\subset S\times N_{\mathrm{lf}}.$$
Let the intensity measure $\mu$ be $\sigma$-finite. Then the Campbell measure is, in its first component, absolutely continuous with respect to $\mu$. Its Radon-Nikodym density $P^!_{\xi}$ is called \textit{reduced Palm distribution}. Therefore

$$C^!(B\times F)=\int_BP^!_{\xi}(F)\d\mu(\xi),\quad B\subset S, F\in \mathcal{N}_{\mathrm{lf}}$$

and for non-negative functions $h:S\times N_{\mathrm{lf}}\rightarrow [0,\infty)$

$$\E\sum_{\xi\in\Phi}h(\xi,\Phi\setminus\{\xi\})=\int\int h(\xi,\eta)\d P^!_{\xi}(\eta)\d\mu(\eta).$$

Hence $P^!_{\xi}$ can be interpreted as the conditional distribution of $\Phi\setminus{\{\xi\}}$ given $\xi\in\Phi$. Thereby the retaining probability of a point $(\xi,\mxi)\in\Phi_M$ equals

$$r(\xi,m_{\xi})=\int_{{M}_{\mathrm{lf}}}f_{\mathrm{th}}(\varphi;\xi,m_{\xi})~P_{\xi,\mxi}^!(\d\varphi)$$
where $M_{\lf}$ is the suitably defined space of point configurations of the marked process $\Phi_M$. We give the full definition of $M_{\lf}$ and its $\sigma$-algebra in the next section.
The \textit{generating functional} of a point process $\Phi$ is defined as
$$G_{\Phi}(u)=\E\prod_{\xi\in\Phi}u(\xi)$$
for functions $u: S\rightarrow [0,1]$, see \citep{westcott}.
The Palm distribution $P^!_{\xi,\mxi}$ equals the distribution of $\Phi_M$ since $\Phi_M$ is a Poisson process - see Example 4.3 in \cite{chiu_stoyan}. As a consequence of this, $r(\xi,m_{\xi})$ is the generating functional of $\Phi_M$ evaluated at $f_{\th}$. Therefore
\begin{align*}
r(\xi,\mxi)=\exp(-\lambda |B_R(o)|\cdot \mxi)
\end{align*}
and the intensity of the thinned process equals 
\begin{align*}
\lambda_{\mathrm{th}}=\lambda\int_0^1 r(\mxi)~\d \mxi= |B_R(o)|^{-1}(1-\exp(-\lambda |B_R(o)|)).
\end{align*}

The Palm distribution of a general point process is more difficult to handle, however it can be explicitly calculated for many Cox process models \citep{moeller03shotnoisecox,coeurjolly2015}. Besides, \cite{mecke1967} indicates how to calculate the Palm distribution of an infinitely divisible point process. 
A point process $\Phi$ is called infinitely divisible if, for all $n\in\N$ there exist iid.\ processes $\Phi_1,\dots, \Phi_n$ such that $\Phi\stackrel{d}{=}\Phi_1+\dots+\Phi_n$.

\vspace{2mm}

\section{Generalizing the Matérn hard-core processes}\label{sec:general_model}
We present a new point process model, obtained by dependent thinning of a ground process $\Phi$, which generalizes the Matérn model in several ways. We therefore call the new model \textit{generalized Matérn model}.

Suppose that $\Phi$ is a locally finite point process on $S$. Each point $\xi$ of $\Phi$ is independently attached with a random mark $m_{\xi}$. We allow these marks to be continuous functions from $S$ to $\R$, i.e.\ an element of the space of continuous functions $\M= {C}(S,\R)$ with law $\nu$.
Then,
$$\Phi_M=\{(\xi,m_{\xi}):\xi\in \Phi\}$$
is a marked point process, i.e.\ a mapping into $(M_{\lf},\mathcal{M}_{\lf})$, with the set of point configurations

$$M_{\lf}=\left\{\varphi\subset S\times \mathbb{M}:\{\xi\in S,(\xi,\mxi)\in\varphi\}\in N_{\mathrm{lf}} \text{ and } (\xi,\mxi),(\xi,\mxi')\in\varphi \Rightarrow \mxi=\mxi'\right\}
$$
and its $\sigma$-algebra $\mathcal{M}_{\mathrm{lf}}$ which is defined analogous to $\mathcal{N}_{\lf}$.
The Bernoulli random variable $\tau_{\Phi_M;\xi,m_\xi}$ shall indicate whether a point of $\Phi_M$ is retained in the thinned process. We define the thinned marked process 
\begin{align}\label{eq:thinned}
\Phi_{\mathrm{th}}=\{(\xi,m_{\xi})\in\Phi_M:(\xi,m_{\xi})\in \Phi_M, \tau_{\Phi_M;\xi,m_\xi}=1\},
\end{align}
which we call \textit{generalized Matérn process}, and the thinned ground process
\begin{align}\label{eq:thinnedground}
\Phi_{\th}^0=\{\xi:(\xi,\mxi)\in\Phi_{\th}\}.
\end{align}
The success probability of $\tau_{\Phi_M;\xi,m_\xi}$ equals the \textit{thinning function} 
$$f_{\mathrm{th}}(\Phi_M;\xi,m_{\xi})=\prod_{(\xi',\mxi')\in \Phi_M}(1-\zeta(\xi,m_{\xi},\xi',m_{\xi'})p(\xi,m_{\xi},\xi',m_{\xi'})).$$
Here, $\zeta:(S,\M)^2\rightarrow\{0,1\}$ is a measurable function which we call \textit{competition function} and which specifies the inferior points which are endangered to be deleted. We call a point $\xi$ \textit{inferior} if $\zeta(\xi,\mxi,\xi',\mxi')=1$ for some $(\xi',\mxi')\in\Phi_M\setminus\{\xi,\mxi\}$. Likewise, $p:(S,\M)^2\rightarrow[0,1]$ is a measurable function determining the probability that an inferior point is deleted.
We henceforth fix $$\zeta(\xi,\mxi,\xi,\mxi)=1, \quad p(\xi,\mxi,\xi,\mxi)=1-p_0\in[0,1]$$ and thereby include independent $p_0$-thinning in our model. In order to simplify notation, we will use abbreviations like $\xxi=(\xi,\mxi)$ and $\zeta(\xxi,\xxi')=\zeta(\xi,\mxi,\xi',\mxii)$ throughout the paper.

%

\begin{table}[t]
	\centering
	\begin{tabular}{l|c|c|c}
		&$p$&$\mxi$&$\zeta$\\\hline
		Matérn I& 1 &- &$\1_{\xi'\in B_R(\xi)}$\\
		Generalized Matérn I&$(1-\|\xi-\xi'\|/R)_+$&-&$\1_{\xi'\in B_R(\xi)}$\\
		Matérn II& 1&$\mathcal{U}[0,1]$&$\1_{\xi'\in B_R(\xi)}\1_{\mxi'<\mxi}$\\
		
		Generalized Matérn II&$(1-\|\xi-\xi'\|/R)_+$&$\mathcal{U}[0,1]$&$\1_{\xi'\in B_R(\xi)}\1_{\mxi'<\mxi}$\\
	\end{tabular}
	\caption{Let $S=\R^d$. The classical Matérn models are obtained as special case of our general model. We call the models resulting from Matérn I or II by including an additional stochastic thinning, \textit{generalized Matérn I} and \textit{II} model respectively. }
	\label{tab:models}
\end{table}

\begin{Example}
	The Matérn hard-core models I and II can be easily derived from our model, see Table~\ref{tab:models}. There, we also give generalizations where $p\neq 1$.
\end{Example}

\begin{figure}[H]
	\centering
	\begin{minipage}[b]{6.5 cm}
		\includegraphics[width=1\textwidth]{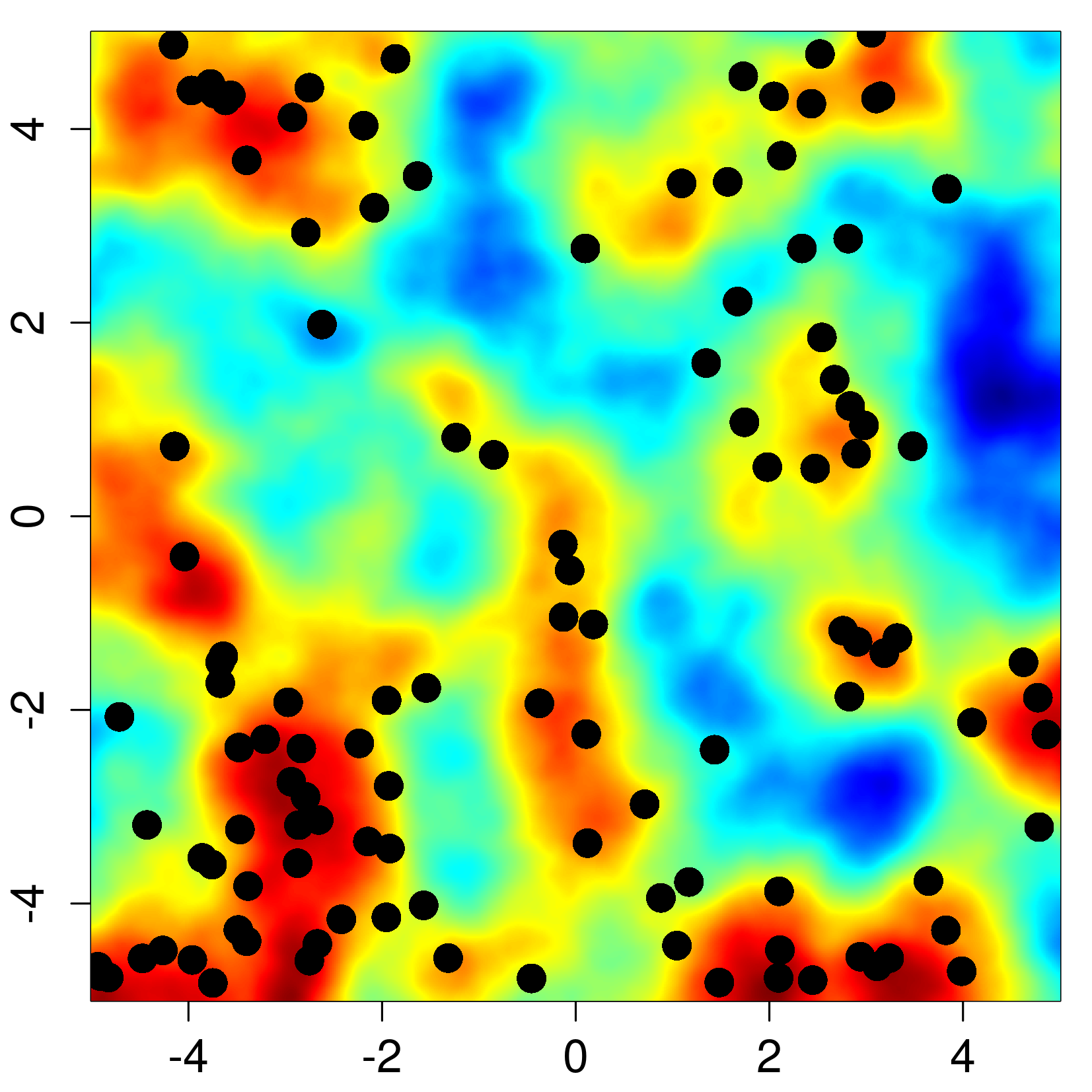}
		\includegraphics[width=1\textwidth]{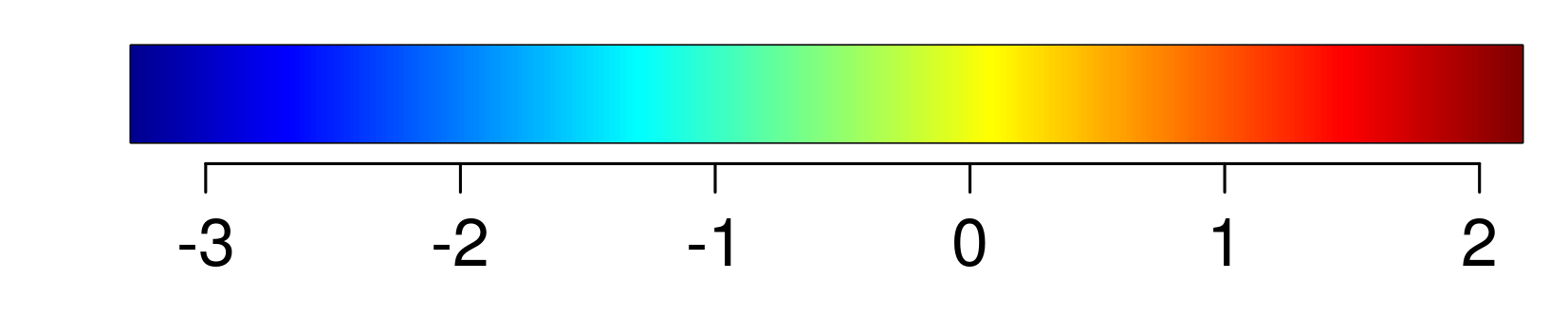}
	\end{minipage}
	\begin{minipage}[b]{6.5cm}
		\includegraphics[width=1\textwidth]{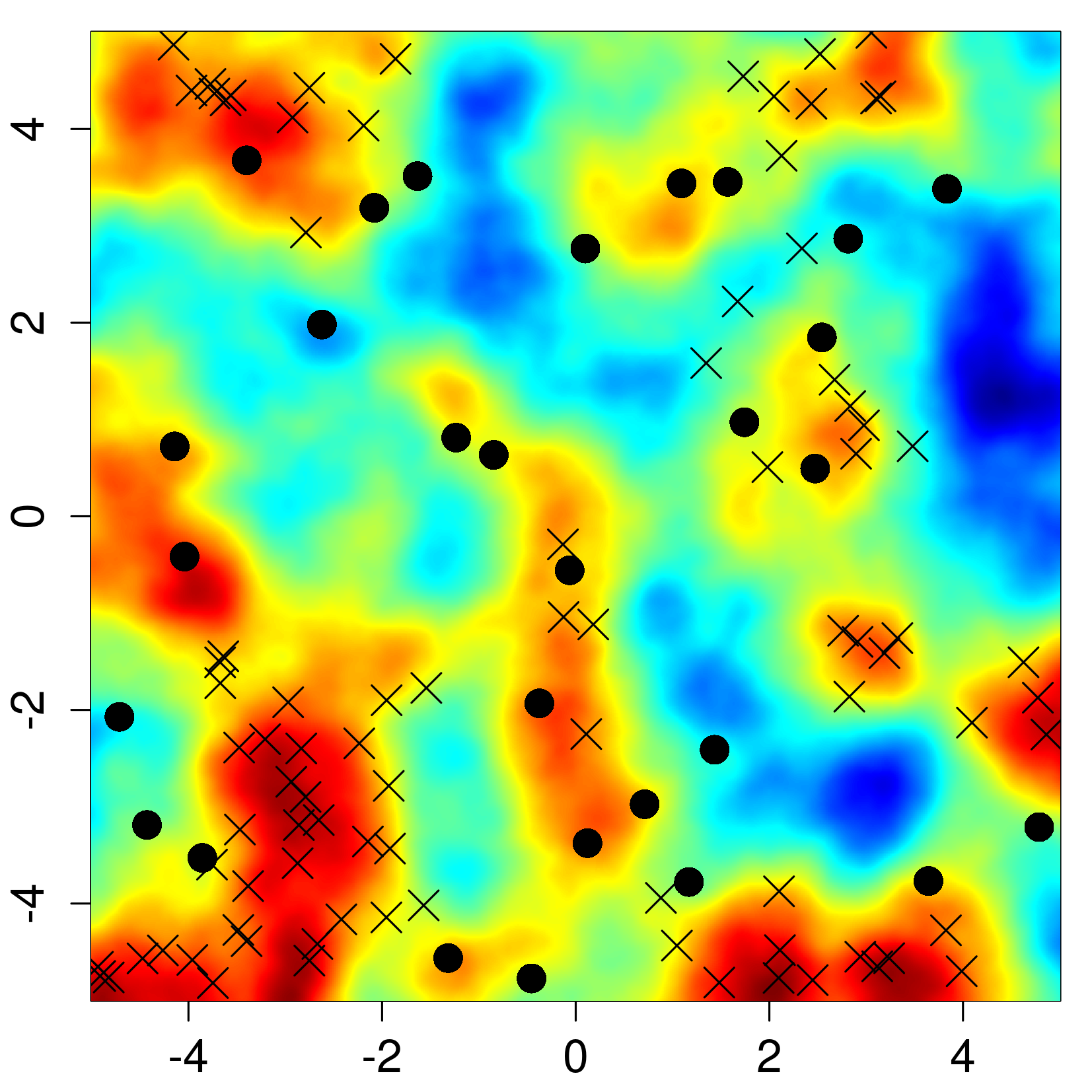}
		\includegraphics[width=1\textwidth]{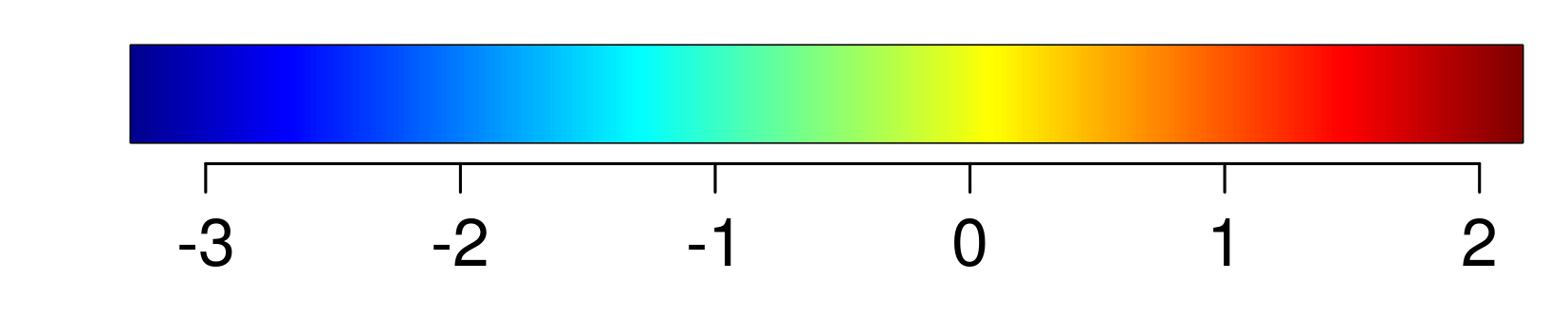}
	\end{minipage}
	\begin{minipage}[b]{6.5cm}
		\includegraphics[width=1\textwidth]{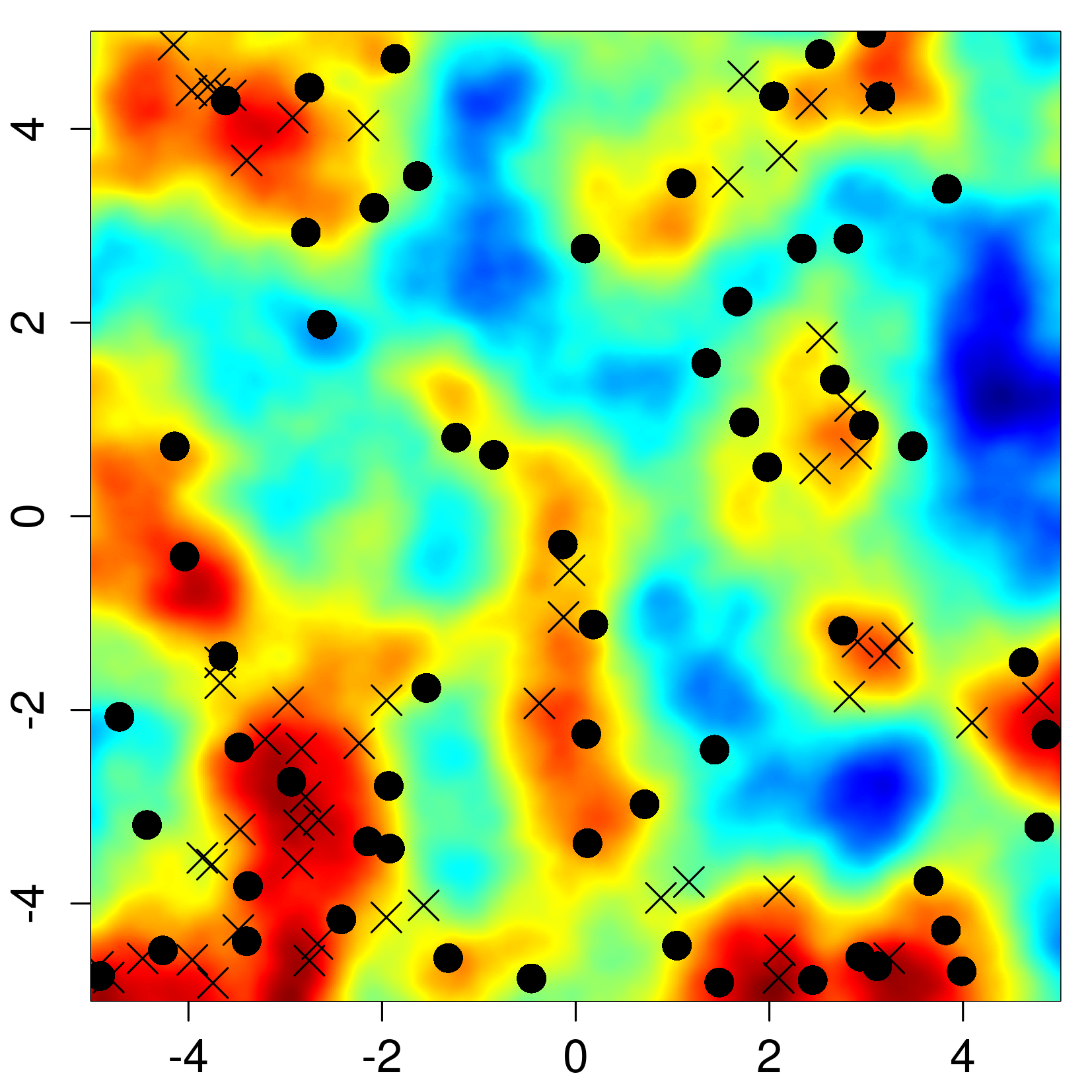}
		\includegraphics[width=1\textwidth]{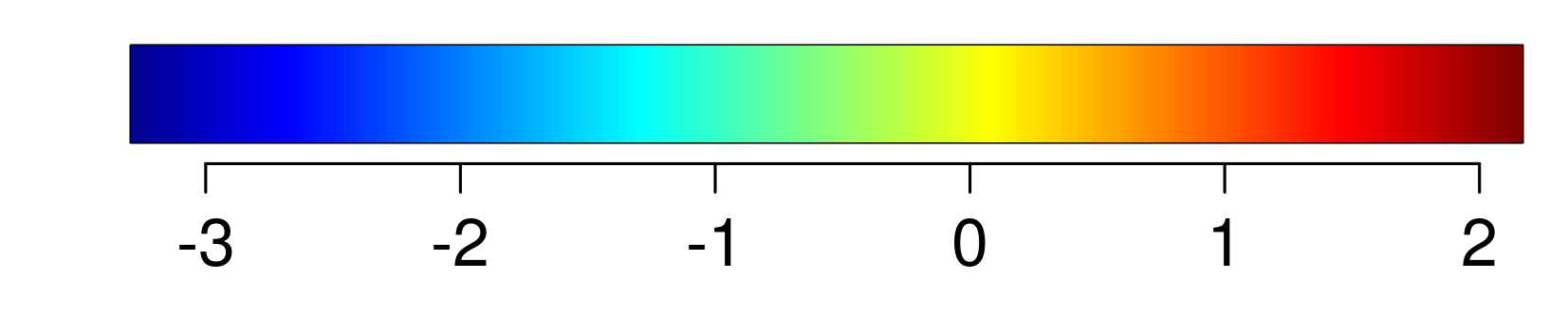}
	\end{minipage}
	\begin{minipage}[b]{6.5cm}
		\includegraphics[width=1\textwidth]{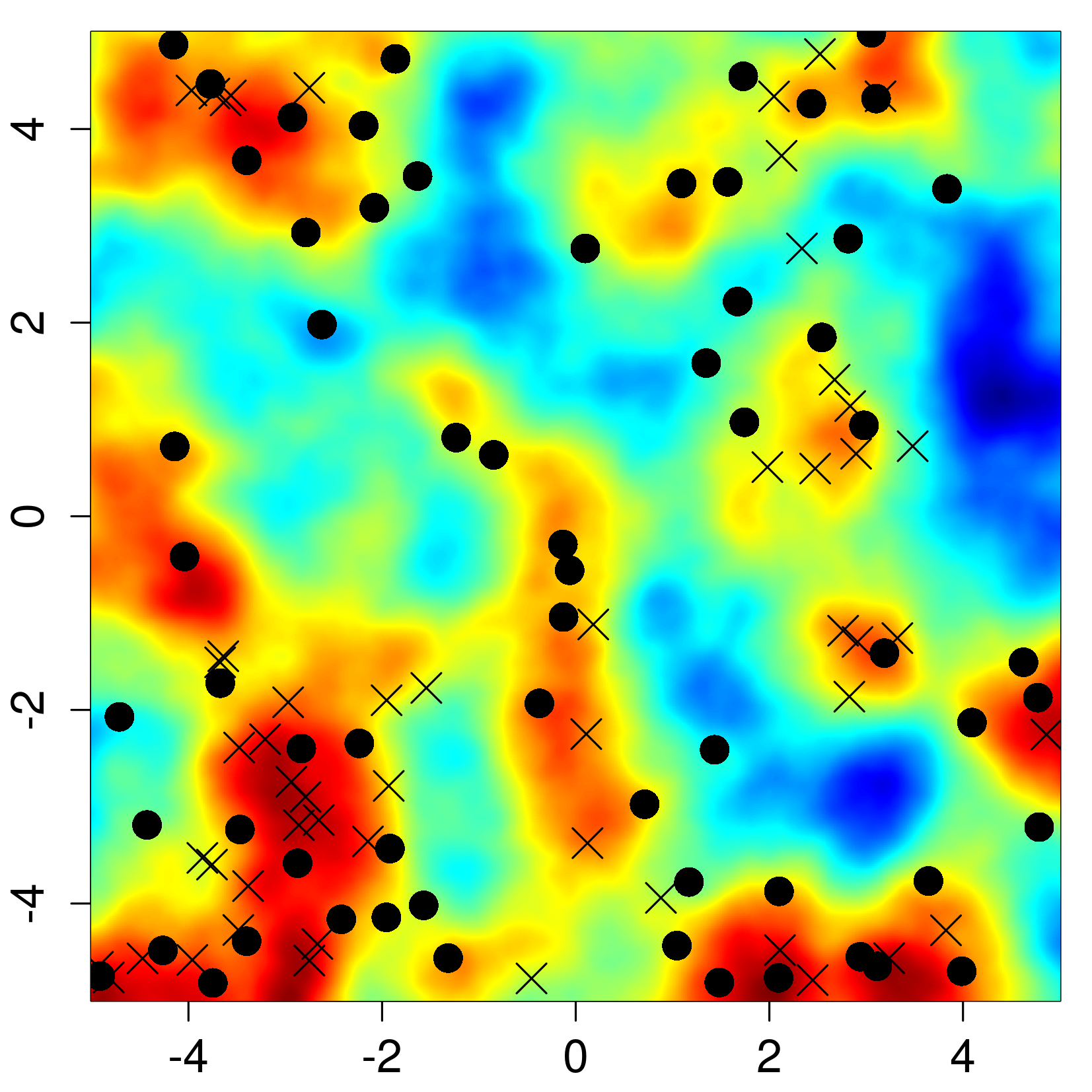}
		\includegraphics[width=1\textwidth]{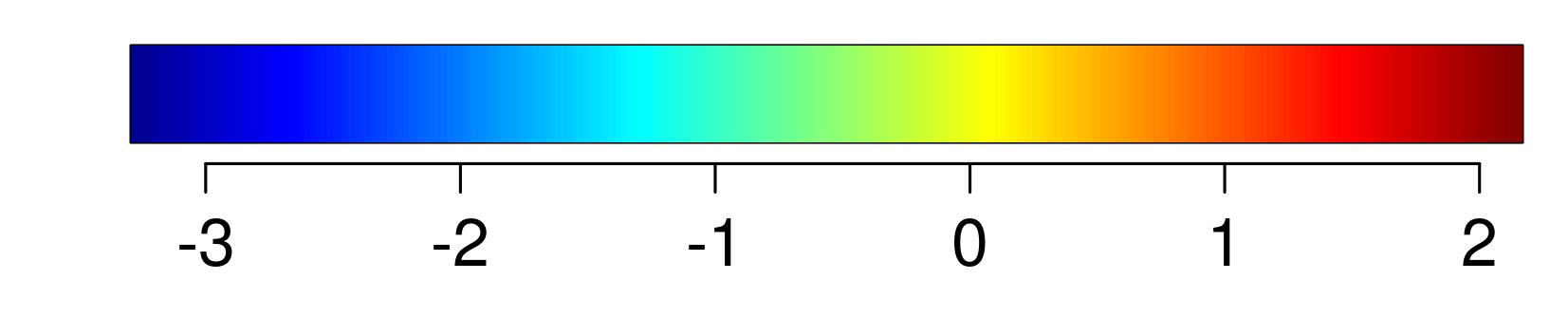}
	\end{minipage}
	\caption{Plot of the original Poisson process $\Phi$ and the underlying intensity function (upper left). Thinned points $\Phi_{\mathrm{\mathrm{th}}}^0$ of a generalized Matérn I model (upper right) and a generalized Matérn II model (lower left) with $R=1$ and $p_{\xi'}(\xi)=\max(0,1-\|\xi-\xi'\|)$.	The last plot shows the thinned points of a generalized hard-core process with competition function $\zeta(\xxi,\xxi')=\1_{m_{\xi'}(\xi)>m_{\xi}(\xi)}$, random mark functions  $m_{\xi}(\cdot)=u\cdot \varphi(\cdot)$, $u\sim\mathcal{U}[0,1]$ with the two-dimensional standard-normal density $\varphi$ and thinning probability $p(\xxi,\xxi')=\max(0,1-\|\xi-\xi'\|)$.}
	\label{fig:plots}
\end{figure}

\begin{Example}
	A further generalization of the Matérn I model in $S=\R^d$ was presented in \cite{teichmann2013hardcore}. According to their thinning rule, a point $\xi$ of the ground process $\Phi$ is retained with probability $$p_0\prod_{\xi'\in \Phi_M\setminus\{\xi\}}(1-f(\|\xi-\xi'\|)),$$
	with $p_0\in(0,1]$ and some deterministic function $f:[0,\infty)\rightarrow [0,1]$. This equals our model with the choice $\zeta\equiv 1$ and $p(\xi,\mxi,\xi',\mxi')=f(\|\xi-\xi'\|)$.
\end{Example}

\begin{Example}
	Consider now $S=\R^d$, marks $m_{\xi}$ in $\M=\R^{\{0,1\}}$ with $m_{\xi}(0)\sim \mu$ and $m_{\xi}(1)\sim \nu$ for probability measures $\mu$ and $\nu$. Let $\zeta(\xi,m_{\xi},\xi',m_{\xi'})=\1_{m_{\xi}(0)\geq m_{\xi'}(0)}$ and $p(\xi,m_{\xi},\xi',m_{\xi'})=f(\|\xi-\xi'\|,m_{\xi}(1),m_{\xi'}(1))$. Then 
	$$f_{\mathrm{th}}(\Phi_M;\xi,m_{\xi})=p_0\prod_{\boldsymbol{\xi'}\in\Phi_M\setminus\{\boldsymbol{\xi}\}}\big[1-\1_{m_{\xi}(0)\geq m_{\xi'}(0)}f\big(\|\xi-\xi'\|,m_{\xi}(1),m_{\xi'}(1)\big)\big].$$
	This model was presented by \cite{teichmann2013hardcore} as an extension of the Matérn II model.
\end{Example}

\begin{Example}\label{ex:huetchen}
	Let $\Phi$ be an inhomogeneous Poisson process in $\R^d$, attached with random mark functions  $m_{\xi}(\cdot)=u\cdot \varphi(\cdot)$, $u\sim\mathcal{U}[0,1]$ with the $d$-dimensional standard-normal density $\varphi$. Consider the competition function $\zeta(\xxi,\xxi')=\1_{m_{\xi'}(\xi)>m_{\xi}(\xi)}$ and $p(\xxi,\xxi')=\max(0,1-\|\xi-\xi'\|)$. This leads to a soft-core model where inferior points are the more likely to be thinned the closer they are to superior points. See Figure~\ref{fig:plots} for a plot of this model in $d=2$ and Figure~\ref{fig:huetchen} for a plot of arbitrary points $(\xi,\mxi)$ and $(\eta,\meta)$ with $d=1$.
\end{Example}

\begin{figure}[H]
	\centering
	\includegraphics[width=0.35\textwidth]{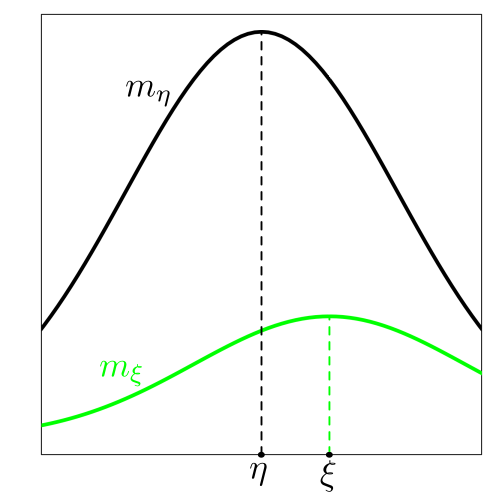}
	\caption{Arbitrary points $(\xi,\mxi)$ and $(\eta,\meta)$ of $\Phi_M$ from Example \ref{ex:huetchen} with $d=1$. Since $\meta(\xi)>\mxi(\xi)$, the point $(\xi,\mxi)$ is inferior to $(\eta,\meta)$ and hence endangered to be thinned with probability $p(\xi,\mxi,\eta,\meta)=\max(0,1-|\xi-\eta|)$.}
	\label{fig:huetchen}
\end{figure}

\section{Generalized Matérn model based on log Gaussian Cox processes}\label{sec:coxmodel}
Let $P_{\xxi}^!(\cdot)$ be the reduced Palm distribution of $\Phi_M$, that is $P_{\xxi}^!(\cdot)$ is a probability measure on $(M_{\mathrm{lf}},\mathcal{M}_{\mathrm{lf}})$ for each $\xxi\in S\times \M$.
The retaining probability of a point $\xi\in \Phi$ with mark function $m_{\xi}$ can then be calculated by
$$r(\xi,m_{\xi})=\int_{{M}_{\mathrm{lf}}}f_{\mathrm{th}}(\varphi\cup (\xi,\mxi);\xi,m_{\xi})~P_{\xxi}^!(\d\varphi).$$
Since our new model is defined in a rather general setting, reasonable restrictions are needed in order to calculate the reduced Palm distribution of $\Phi_M$ and thereby first and seconder order properties of $\Phi_{\th}$. %
We henceforth assume that $\Phi$ is a log Gaussian Cox process, though all results may be derived in a similar way for other Cox processes or infinitely divisible point processes \citep{mecke1967}, when $P_{\xxi}^!$ is known.

Let $\Psi=\exp(W)$ be the random intensity function of $\Phi$ where $W$ is a Gaussian random field with mean function $\mu$ and covariance function $C$. We write $\Phi\sim\lgcp(\mu,C)$ for short.

\begin{Proposition}\label{prop:FOI}
	Let $\Phi\sim\lgcp(\mu,C)$ and $h(\xxi,\xxi')=1-\zeta(\xxi,\xxi')p(\xxi,\xxi')$, then the retaining probability is
	$$r(\xi,m_{\xi})=\E\prod_{\xxi'\in\widetilde{\Phi}_M} h(\xxi,\xxi'),$$
	with $\widetilde{\Phi}\sim\lgcp(\widetilde{\mu},C)$ and $\widetilde{\mu}(\cdot)=\mu(\cdot)+C(\cdot,\xi)$.
	Furthermore, if $\widetilde{\Psi}$ is the random intensity function of $\widetilde{\Phi}$, the first order intensity of $\Phi_{\th}^0$ is given by
	\begin{align}
	\label{eq:FOI}
	\rho_{th}(\xi)=p_0\rho_{\Phi}(\xi)\int_{\M}\E_{\widetilde{\Psi}}\exp\left(-\int_{S}\int_{\M}\zeta(\xxi,\xxi')p(\xxi,\xxi')\widetilde{\Psi}(\xi')\nu(dm_{\xi'})\d\xi'\right)\nu(\d m_{\xi}),
	\end{align}
	where $\rho_{\Phi}$ is the intensity of $\Phi$.
\end{Proposition}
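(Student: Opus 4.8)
The plan is to first determine the reduced Palm distribution $P^!_{\xxi}$ of the marked ground process $\Phi_M$, then substitute it into the expression for the retaining probability $r(\xi,\mxi)$ given at the start of this section, and finally integrate over the mark to obtain the intensity $\rho_{th}$ of $\Phi_{\th}^0$. Since the marks are attached independently with law $\nu$ and independently of the locations, the marked Palm distribution is just the Palm version of the ground process $\Phi$ re-equipped with an independent i.i.d.\ $\nu$-marking; thus the whole problem reduces to identifying the reduced Palm distribution of the $\lgcp$ ground process.

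For that first and central step I would invoke the Slivnyak--Mecke theorem in its Cox-process form. Writing $\Phi$ as a Poisson process with random intensity $\Psi=\exp(W)$ and conditioning on $\Psi$, the reduced Campbell measure satisfies $\E\sum_{\xi\in\Phi}g(\xi,\Phi\setminus\{\xi\})=\int \E[g(\xi,\Phi)\,\Psi(\xi)]\,\d\xi$, so that $P^!_{\xi}$ is the law of $\Phi$ after size-biasing the driving field by the factor $\Psi(\xi)=\exp(W(\xi))$. The key observation is then purely Gaussian: tilting the law of a field $W\sim N(\mu,C)$ by $\exp(W(\xi))/\E[\exp(W(\xi))]$ leaves the covariance $C$ unchanged and shifts the mean by the covariance action $C(\cdot,\xi)$, i.e.\ produces the field $\widetilde{W}\sim N(\mu+C(\cdot,\xi),C)$. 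This is a Cameron--Martin-type identity which I would verify at the level of finite-dimensional distributions by completing the square in the joint Gaussian density of $(W(t_1),\dots,W(t_k),W(\xi))$. Consequently the size-biased driving field is again log-Gaussian, and under $P^!_{\xi}$ the process $\Phi$ is distributed as $\widetilde{\Phi}\sim\lgcp(\widetilde{\mu},C)$ with $\widetilde{\mu}=\mu+C(\cdot,\xi)$, which is exactly the claimed Palm version.

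With the Palm distribution in hand, the first formula follows by substitution: in $r(\xi,\mxi)=\int f_{\th}(\varphi\cup(\xi,\mxi);\xi,\mxi)\,P^!_{\xxi}(\d\varphi)$ the added point $\xxi$ contributes the self-interaction factor $h(\xxi,\xxi)=1-\zeta(\xxi,\xxi)p(\xxi,\xxi)=p_0$, leaving the product $\prod_{\xxi'\in\widetilde{\Phi}_M}h(\xxi,\xxi')$ over the remaining Palm points; this is precisely the $p_0$ made explicit in \eqref{eq:FOI}. For the intensity I would then use $\rho_{th}(\xi)=\rho_{\Phi}(\xi)\int_{\M}r(\xi,\mxi)\,\nu(\d\mxi)$ and evaluate the expectation through the generating functional of the marked $\lgcp$ $\widetilde{\Phi}_M$. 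Conditioning on $\widetilde{\Psi}$, the process $\widetilde{\Phi}_M$ is Poisson on $S\times\M$ with intensity $\widetilde{\Psi}(\xi')\,\d\xi'\otimes\nu(\d\mxii)$, and the Poisson generating functional evaluated at $h$ yields $\exp\big(-\int_S\int_{\M}(1-h(\xxi,\xxi'))\widetilde{\Psi}(\xi')\,\nu(\d\mxii)\,\d\xi'\big)$; since $1-h=\zeta p$, taking the expectation over $\widetilde{\Psi}$ and integrating over $\mxi$ reproduces \eqref{eq:FOI}.

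The main obstacle is the Palm-distribution step: making the Slivnyak--Mecke reduction and the Gaussian exponential tilting rigorous in the infinite-dimensional, function-valued setting, in particular justifying that the mean shift $C(\cdot,\xi)$ obtained on finite-dimensional marginals indeed characterizes the full law of the tilted field, and that the size-biasing is compatible with the independent $\nu$-marking. The remaining manipulations --- splitting off $p_0$, applying the Poisson generating functional, and exchanging the order of integration with the expectation over $\widetilde{\Psi}$ (Fubini/Tonelli, all integrands being non-negative) --- are routine.
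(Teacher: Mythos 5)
Your proposal is correct and follows essentially the same route as the paper: identify the reduced Palm distribution of the $\lgcp$ as $\widetilde{\Phi}\sim\lgcp(\mu+C(\cdot,\xi),C)$, split off the self-interaction factor $p_0$, and evaluate the Poisson generating functional conditionally on $\widetilde{\Psi}$. The only difference is that the paper simply cites Proposition~1 of \cite{coeurjolly2015} for the Palm step, whereas you sketch its proof (Slivnyak--Mecke conditioning plus Gaussian exponential tilting), which is exactly the argument behind that cited result.
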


\begin{proof}
	The reduced Palm distribution $P_{\xi}^!$ of $\Phi$ equals the distribution of $\widetilde{\Phi}\sim\lgcp(\widetilde{\mu},C)$ since $\Phi\sim\lgcp(\mu,C)$ - see Proposition 1 in \cite{coeurjolly2015}. Therefore
	\begin{align*}r(\xi,m_{\xi})=\ &\int_{{M}_{\mathrm{lf}}}f_{\mathrm{th}}(\varphi\cup \{\xxi\};\xxi)~P_{\xxi}^!(\d\varphi)\\
	=&\  \E\prod_{\xxi'\in\widetilde{\Phi}_M\cup\{\xxi\}} [1-\zeta(\xxi,\xxi')p(\xxi,\xxi')]\\
	=&\ (1-\zeta(\xxi,\xxi)p(\xxi,\xxi))\E\prod_{\xxi'\in\widetilde{\Phi}_M} [1-\zeta(\xxi,\xxi')p(\xxi,\xxi')]\\
	=&\  p_0\ \E\prod_{\xxi'\in\widetilde{\Phi}_M} h(\xxi,\xxi')\\
	=& \  p_0\ \E_{\widetilde{\Psi}} \exp\left(-\int_{S\times\M} \big(1-h(\xxi,\xxi')\big)\widetilde{\Psi}(\xi')\d\xi'\nu(\d m_{\xi'})\right),
	\end{align*}
	where the last equality follows from calculating the generating functional of $\widetilde{\Phi}_M$.
	
\end{proof}

\begin{Proposition}\label{prop:soi_general}
	Consider $\widetilde{\Phi}\sim\lgcp(\tilde{\mu},C)$ with $\widetilde{\mu}(\cdot)=\mu(\cdot)+C(\cdot,\xi)+C(\cdot,\eta)$ and let $\rho_{\Phi}^{(2)}$ be the second order intensity of $\Phi$. Then, the second order intensity of the thinned process $\Phi_{\th}^0$ equals
	\begin{align*}\rho_{\mathrm{th}}^{(2)}(\xi,\eta)=\rho^{(2)}_{\Phi}(\xi,\eta)p_0^2&\int_{\M}\int_{\M}h(\xxi,\eeta)h(\eeta,\xxi)\\
	&\E_{\widetilde{\Psi}} \exp\left(-\int_{S\times\M} \big(1-h(\xxi,\xxi')h(\eeta,\xxi')\big)\widetilde{\Psi}(\xi')\d\xi'\nu(\d\mxi')\right)\nu(\d\mxi)\nu(\d\meta).
	\end{align*}
\end{Proposition}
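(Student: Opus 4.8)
The plan is to follow the blueprint of Proposition~\ref{prop:FOI}, but with the first-order objects replaced by their second-order counterparts. Recall that $\rho_{\th}^{(2)}$ is the density of the second factorial moment measure of $\Phi_{\th}^0$, i.e.
$$\E\sum_{\xi\neq\eta\in\Phi_{\th}^0}g(\xi,\eta)=\int_S\int_S g(\xi,\eta)\,\rho_{\th}^{(2)}(\xi,\eta)\,\d\xi\,\d\eta$$
for non-negative measurable $g$. First I would re-express the left-hand side through the marked ground process: an ordered pair $\xi\neq\eta$ of $\Phi$ contributes to $\Phi_{\th}^0$ precisely when both points are retained, so that
$$\E\sum_{\xi\neq\eta\in\Phi_{\th}^0}g(\xi,\eta)=\E\sum_{\xxi\neq\eeta\in\Phi_M}g(\xi,\eta)\,\tau_{\Phi_M;\xi,\mxi}\,\tau_{\Phi_M;\eta,\meta}.$$
Conditioning on $\Phi_M$ and using that, given the configuration, distinct points are thinned via conditionally independent deletion attempts (which is built into the product form of $f_{\th}$), the conditional expectation of $\tau_{\Phi_M;\xi,\mxi}\,\tau_{\Phi_M;\eta,\meta}$ factorises into $f_{\th}(\Phi_M;\xi,\mxi)\,f_{\th}(\Phi_M;\eta,\meta)$.

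The next step is a purely combinatorial rearrangement of this double product. Splitting off the self-terms $h(\xxi,\xxi)=h(\eeta,\eeta)=p_0$, the mutual-competition terms $h(\xxi,\eeta)$ and $h(\eeta,\xxi)$, and the contributions of the remaining points yields
$$f_{\th}(\Phi_M;\xi,\mxi)\,f_{\th}(\Phi_M;\eta,\meta)=p_0^2\,h(\xxi,\eeta)\,h(\eeta,\xxi)\prod_{\xxi'\in\Phi_M\setminus\{\xxi,\eeta\}}h(\xxi,\xxi')\,h(\eeta,\xxi').$$
I would then apply the second-order Campbell--Mecke formula for the marked process $\Phi_M$: this pulls $\xxi$ and $\eeta$ out of the double sum, replaces the summation by integration against the second factorial moment measure, which factors as $\rho_\Phi^{(2)}(\xi,\eta)\,\d\xi\,\d\eta\,\nu(\d\mxi)\,\nu(\d\meta)$ because the marks are iid and independent of the locations, and leaves the residual configuration $\Phi_M\setminus\{\xxi,\eeta\}$ governed by the two-point reduced Palm distribution.

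The key technical input --- and the step I expect to be the main obstacle --- is the identification of this two-point reduced Palm distribution. Extending Proposition~1 of \cite{coeurjolly2015} from one to two points, the second-order reduced Palm distribution of an $\lgcp(\mu,C)$ is again an LGCP with unchanged covariance $C$ but mean shifted by both kernels, $\widetilde\mu(\cdot)=\mu(\cdot)+C(\cdot,\xi)+C(\cdot,\eta)$, which is exactly the process $\widetilde{\Phi}$ in the statement. Granting this, the residual product becomes $\E\prod_{\xxi'\in\widetilde{\Phi}_M}h(\xxi,\xxi')\,h(\eeta,\xxi')$, i.e.\ the generating functional of $\widetilde{\Phi}_M$ evaluated at $u(\xxi')=h(\xxi,\xxi')\,h(\eeta,\xxi')$. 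Conditioning on the random intensity $\widetilde\Psi$ and invoking the Poisson generating functional exactly as at the end of Proposition~\ref{prop:FOI} turns it into
$$\E_{\widetilde\Psi}\exp\left(-\int_{S\times\M}\big(1-h(\xxi,\xxi')\,h(\eeta,\xxi')\big)\widetilde\Psi(\xi')\,\d\xi'\,\nu(\d\mxi')\right).$$
Collecting the factors $\rho_\Phi^{(2)}(\xi,\eta)$, $p_0^2$, the mutual-competition term and this conditional expectation, and integrating over the two marks, reproduces the asserted expression for $\rho_{\th}^{(2)}(\xi,\eta)$.
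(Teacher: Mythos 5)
Your proposal is correct and follows essentially the same route as the paper's proof: factor the joint retention probability into $p_0^2\,h(\xxi,\eeta)\,h(\eeta,\xxi)$ times a product over the remaining points, identify the two-point reduced Palm distribution of the LGCP as an LGCP with mean shifted by both covariance kernels (Proposition 1 of \cite{coeurjolly2015}), and evaluate the resulting generating functional of the Cox process. You are somewhat more explicit about the Campbell--Mecke step and the conditional independence of the retention indicators given $\Phi_M$, but these are exactly the ingredients the paper uses implicitly.
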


\begin{proof}
	The probability that none of the two arbitrary points $(\xi,\mxi)$ and $(\eta,\meta)$ is deleted by any point of the point configuration $\varphi\in M_{\mathrm{lf}}$ is
	$$f^{(2)}_{\th}(\varphi;\xxi,\eeta)=\prod_{\xxi'\in \varphi}(1-\zeta(\xxi,\xxi')p(\xxi,\xxi'))(1-\zeta(\eeta,\xxi')p(\eeta,\xxi')).$$
	
	Thus, the probability that $(\xi,\mxi),(\eta,\meta)\in\Phi_M$ are retained in $\Phi_{\th}$ equals
	\begin{align*}
	r(\xxi,\eeta)=\int_{M_{\mathrm{lf}}}f_{\th}^{(2)}(\varphi\cup\{\xxi,\eeta\};\xxi,\eeta)P^!_{\xxi,\eeta}(\d\varphi),
	\end{align*}
	where $P^!_{\xxi,\eeta}$ is the two-point reduced Palm distribution of $\Phi_M$, which is also
	the distribution of a log Gaussian Cox process $\widetilde{\Phi}\sim\lgcp(\tilde{\mu},C)$ with $\widetilde{\mu}(\cdot)=\mu(\cdot)+C(\cdot,\xi)+C(\cdot,\eta)$ - see again Proposition 1 in \cite{coeurjolly2015}.
	Therefore 
	\begin{align*}
	&\int_{M_{\mathrm{lf}}}f_{\th}^{(2)}(\varphi\cup\{\xxi,\eeta\};\xxi,\eeta)P^!_{\xxi,\eeta}(\d\varphi)\\
	=&\  \E\prod_{\xxi'\in\widetilde{\Phi}_M\cup\{\xxi,\eeta\}} \bigg(1-\zeta(\xxi,\xxi')p(\xxi,\xxi')\bigg)\bigg(1-\zeta(\eeta,\xxi')p(\eeta,\xxi')\bigg)\\
	=&\   p_0^2h(\xxi,\eeta)h(\eeta,\xxi)\ \E\prod_{\xxi'\in\widetilde{\Phi}_M} h(\xxi,\xxi')h(\eeta,\xxi')\\
	=& \  p_0^2h(\xxi,\eeta)h(\eeta,\xxi)\ \E_{\widetilde{\Psi}} \exp\left(-\int_{S\times\M} \bigg(1-h(\xxi,\xxi')h(\eeta,\xxi')\bigg)\widetilde{\Psi}(\xi')\d\xi'\nu(\d\mxi)\right).
	\end{align*}
	
\end{proof}

\section{Application to mixed moving maxima processes}\label{sec:appl_m3}

We establish a connection between the generalized Matérn model and mixed moving maxima processes in this section. In the first subsection, we choose a specific thinning function and prove that a process based on the corresponding generalized Matérn process \eqref{eq:thinned} converges to known (conditional) mixed moving maxima processes. We slightly modify this thinning function in the second subsection to obtain a process whose first and second order properties can be derived and whose points can be recovered from observations of the mixed moving maxima process itself under rather mild assumptions.\\

\textbf{General framework.} Let $S=\R^d$, $K\subset S$ be compact and let $X$ be a stochastic process whose paths are almost surely in $\X=C(S,\R)$ and which fulfils the condition 
\begin{align}\label{eq:Xcond2}
\E_X\int_S\sup_{t\in K}X(t-\xi)~\d\xi<\infty.
\end{align}
Then a mixed moving maxima process \citep{smith1990maxstable} is defined by
\begin{align}\label{def:M3} 
Z(t)=\bigvee_{(s,u,X)\in\Theta} u X(t-s), \qquad t \in S,
\end{align}
where $\Theta$ is a Poisson process on $S\times (0,\infty]\times \mathbb{X}$ with directing measure
\begin{align*}
\d\lambda(s,u,X)= \d s \, u^{-2} \d u \, \d \P_X.
\end{align*}
Further, we assume that $\Psi$ is a non-negative process with 
\begin{align}\label{eq:Xcond1}
\E_{\Psi}\E_X\int_S\sup_{t\in K} X(t-\xi)\Psi(\xi)\d\xi<\infty.
\end{align}
Then the Cox extremal process \citep{d16} is defined in an analogous way
\begin{align}\label{def:Coxextremal}
Y(t)=\bigvee_{(s,u,X)\in\widetilde{\Theta}}uX(t-s),
\end{align}
where $\widetilde{\Theta}$ is a Cox process directed by the random measure $\d\Lambda(s,u,X)=\Psi(s)\d su^{-2}\d u\d\P_X$.

We henceforth consider $\Phi\sim\lgcp(\mu-\log(\tau),C)$ with random intensity function $\tau^{-1}\Psi$ and $\tau>0$. Each point $\xi$ of $\Phi$ is independently attached with a random mark function $\mxi(\cdot)=U_{\xi}X_{\xi}(\cdot-\xi)$, where $U\sim \tau u^{-2}\1_{(\tau,\infty)}\d u$ and $X\sim\d\P_X$. We assume that for each path $X(\omega,\cdot)$ of $X(\cdot)$ there exist monotonously decreasing functions $f_{\omega}$ and $g_{\omega}$ such that
\begin{align}\label{eq:Xmonotone}
g_{\omega}(\|t\|)\leq X(\omega,t)\leq f_{\omega}(\|t\|),\quad \forall t\in S,
\end{align}
and $g_{\omega}(0)=X(\omega,0)=f_{\omega}(0)$. 

\subsection{Matérn extremal process}

We choose the competition function $\zeta$ such that a point $\xi\in\Phi$ is deleted if its corresponding mark function $\mxi$ is - at each point - strictly smaller than the mark function $\mxii$ of some other point $\xi'\in\Phi$. That is, the thinning function can be written as	
\begin{align}\label{eq:thinExHardCore}
f_{\th}(\Phi_M;\xi,\mxi)=\prod_{\xxi'\in\Phi_M}\left[1-\1_{u_{\xi'}>\sup_{t\in S}u_{\xi}X_{\xi}(t-\xi)X_{\xi'}(t-\xi')^{-1}}\right]
\end{align}
and the process resulting from dependent thinning is $$\Phi_{\th}=\{(\xi,\mxi)\in\Phi_M:f_{\th}(\Phi_M;\xi,\mxi)=1\}.$$

We introduce the \textit{Matérn extremal process} defined by
\begin{align}
\label{def:extremal_hardcore}
\Pi(t)=\bigvee_{(\xi,\mxi)\in\Phi_{\th}}\mxi(t).
\end{align}
The $\mxi$ are closely related to the extremal functions of $\Phi_M$ introduced by \cite{dombry2013contpoints}, though the set $\Phi_{\th}$ is usually much larger than the set of extremal functions. The intensity of $\Phi_{\th}$ is finite, this is a fundamental difference compared to the Cox extremal process $Y$ whose underlying point process $\widetilde{\Theta}$ is infinite. Still, the following lemma shows that the two processes coincide in the limiting case $\tau\rightarrow 0$.

\begin{Lemma}\label{lemma:conv}
	Let the conditions \eqref{eq:Xcond2}, \eqref{eq:Xcond1} and \eqref{eq:Xmonotone} hold true and assume that $\Phi\sim\lgcp(\mu-\log\tau,C)$.
	If $\tau\rightarrow 0$, the convergence
	$$\Pi\rightarrow Y$$
	holds weakly in $C(S,\R)$.
\end{Lemma}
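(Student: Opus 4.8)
The plan is to show that the hard-core thinning in \eqref{eq:thinExHardCore} leaves the pointwise maximum unchanged, so that $\Pi$ coincides with the maximum taken over the \emph{entire} marked process $\Phi_M$, and then to let $\tau\to0$. First I would identify the law of $\Phi_M$. Since $\Phi\sim\lgcp(\mu-\log\tau,C)$ has random intensity $\tau^{-1}\Psi$ and the marks satisfy $U\sim\tau u^{-2}\1_{(\tau,\infty)}\,\d u$ independently of $X\sim\d\P_X$, the factor $\tau^{-1}$ cancels the normalising $\tau$: conditionally on $\Psi$, the process $\{(\xi,U_\xi,X_\xi)\}=\Phi_M$ is Poisson on $S\times(\tau,\infty)\times\X$ with intensity $\Psi(s)u^{-2}\,\d s\,\d u\,\d\P_X$. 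Hence $\Phi_M$ has exactly the law of the directing Cox process $\widetilde{\Theta}$ of \eqref{def:Coxextremal} restricted to $\{u>\tau\}$. Writing $Z_\tau(t)=\bigvee_{(\xi,\mxi)\in\Phi_M}\mxi(t)$, this shows $Z_\tau$ is the Cox extremal process $Y$ with the points of height $u\le\tau$ removed, so I may realise all of them on a single $\widetilde{\Theta}$, giving $Z_\tau\uparrow Y$ pointwise as $\tau\downarrow0$.

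The key step is $\Pi=Z_\tau$ almost surely. The deletion criterion in \eqref{eq:thinExHardCore} removes $(\xi,\mxi)$ precisely when some $(\xi',\mxii)\in\Phi_M$ satisfies $u_{\xi'}X_{\xi'}(t-\xi')>u_\xi X_\xi(t-\xi)$ for every $t$, i.e.\ when $\mxi$ is \emph{strictly dominated everywhere}. By the integrability following from \eqref{eq:Xcond1}, for each $t$ only finitely many marks exceed any positive level, so $Z_\tau(t)$ is attained by some $(\xi^\ast,m_{\xi^\ast})\in\Phi_M$. A maximiser at $t$ cannot be strictly dominated everywhere, as it is not even dominated at $t$; hence it survives the thinning and lies in $\Phi_{\th}$, so $\Pi(t)\ge m_{\xi^\ast}(t)=Z_\tau(t)$. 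The reverse inequality is trivial since $\Phi_{\th}\subseteq\Phi_M$, giving $\Pi=Z_\tau$. This also explains why, as remarked after \eqref{def:extremal_hardcore}, $\Phi_{\th}$ is typically much larger than the set of extremal functions yet yields the same maximum.

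It then remains to prove $Z_\tau\to Y$ weakly in $C(S,\R)$, which I would do by establishing almost sure uniform convergence on every compact $K\subset S$ under the coupling above. Put $W_\tau(t)=\bigvee_{(s,u,X)\in\widetilde{\Theta},\,u\le\tau}uX(t-s)$, so that $0\le Y(t)-Z_\tau(t)\le W_\tau(t)$ and, pushing the supremum inside the maximum, $\sup_{t\in K}W_\tau(t)\le\bigvee_{u\le\tau}u\,g_s$ with $g_s=\sup_{t\in K}X(t-s)$. For fixed $\varepsilon>0$ the expected number of points of $\widetilde{\Theta}$ with $u\le\tau$ and $u\,g_s>\varepsilon$ equals, after integrating out $u$, at most $\varepsilon^{-1}\E_\Psi\int_S\int_\X\1_{g_s>\varepsilon/\tau}\,g_s\,\Psi(s)\,\d\P_X\,\d s$, which tends to $0$ as $\tau\to0$ by dominated convergence, the dominating function $g_s\Psi(s)$ being integrable by \eqref{eq:Xcond1}. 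Hence $\P(\sup_{t\in K}W_\tau>\varepsilon)\to0$, and monotonicity in $\tau$ upgrades this to almost sure convergence $\sup_{t\in K}W_\tau\to0$.

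Consequently $Z_\tau\to Y$ uniformly on compacts almost surely, hence weakly in $C(S,\R)$, and combined with $\Pi=Z_\tau$ this proves $\Pi\to Y$. I expect the uniform control of $W_\tau$ over $t\in K$ to be the main obstacle: the points with $u\le\tau$ are infinitely many, and it is \eqref{eq:Xcond1} together with the monotone envelope \eqref{eq:Xmonotone} (which bounds $\sup_{t\in K}X(t-s)$ and guarantees continuity of the limit) that renders their aggregate contribution uniformly negligible.
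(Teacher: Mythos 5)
Your proposal is correct, and it shares the paper's pivotal observation while handling the convergence quite differently. The shared step is the identity ``maximum over $\Phi_{\th}$ $=$ maximum over all of $\Phi_M$'': the paper uses exactly this (implicitly, via the attained-maximum argument you spell out) when it replaces $\forall(\xi,\mxi)\in\Phi_{\th}$ by $\forall(\xi,\mxi)\in\Phi_M$ in the second line of its finite-dimensional computation. From there the routes diverge. The paper computes the finite-dimensional distributions of $\Pi$ explicitly via the generating functional of the Cox process, obtaining $\E_{\Psi}\exp[-\int\int\min(1/\tau,\max_i y_i^{-1}X(t_i-\xi))\Psi(\xi)\,\d\xi\,\d\P_X]$, lets $\tau\to0$ to recover the margins of $Y$, and then invokes the tightness argument of Theorem~7 in \cite{d16}. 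You instead identify $\Phi_M$ with the restriction of $\widetilde{\Theta}$ to $\{u>\tau\}$, couple all values of $\tau$ on one realisation of $\widetilde{\Theta}$, and bound $0\le Y-Z_\tau\le W_\tau$ with the truncated tail process, killing $\sup_K W_\tau$ via \eqref{eq:Xcond1} and monotonicity; this yields almost sure locally uniform convergence, which subsumes both the finite-dimensional convergence and the tightness in one stroke. Your argument is the more self-contained of the two (no external tightness lemma needed), at the cost of not producing the explicit finite-dimensional distributions that the paper reuses in the max-domain-of-attraction theorem that follows. Two small points to tighten: the domination $\sup_{t\in K}W_\tau(t)\le\bigvee u\,g_{s,X}$ needs $g_{s,X}=\sup_{t\in K}X(t-s)$ to depend on the path $X$ as well as $s$ (your notation suppresses this), and the claim that the pointwise maximum is \emph{attained} should be stated locally uniformly (only finitely many marks exceed a level $\varepsilon$ anywhere on a compact $K$, by \eqref{eq:Xcond1}), so that $\Pi=Z_\tau$ holds for all $t$ simultaneously and not merely for each fixed $t$ almost surely.
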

\begin{proof}
	The sample paths of the extremal hard-core process $\Pi$ are continuous, since the marks $\mxi$ are continuous and $\Phi$ is locally finite.
	The finite-dimensional distributions of $\Pi$ are given by
	\begin{align*}
	&\ \P(\Pi(t_1)\leq y_1, \dots, \Pi(t_n)\leq y_n)=\P(U_{\xi}X_{\xi}(t_1-\xi)\leq y_1, \dots, U_{\xi}X_{\xi}(t_n-\xi)\leq y_n, \forall(\xi,\mxi)\in\Phi_{\th})\\
	=& \ \P\left(U_{\xi}\leq\min_{1\leq i\leq n}({y_i}{X_{\xi}(t_i-\xi)^{-1}}),\forall(\xi,\mxi)\in\Phi_{\th}\right)=\P\left(U_{\xi}\leq\min_{1\leq i\leq n}({y_i}{X_{\xi}(t_i-\xi)^{-1}}),\forall(\xi,\mxi)\in\Phi_M\right)\\
	=&\ \E_{\Psi}\exp\left[-\int_{\X}\int_S\int_{\min_{1\leq i\leq n}({y_i}{X_{\xi}(t_i-\xi)^{-1}})} u^{-2}\1_{(\tau,\infty)}(u)~\d u\Psi(\xi)\d\xi\d\P_X \right]\\
	=&\ \E_{\Psi}\exp\left[-\int_{\X}\int_S \max\left(\tau,\min_{1\leq i\leq n}({y_i}{X_{\xi}(t_i-\xi)^{-1}})\right)^{-1}\Psi(\xi)~\d\xi~\d\P_X\right]\\
	=&\ \E_{\Psi}\exp\left[-\int_{\X}\int_S \min\left(1/\tau,\max_{1\leq i\leq n}({y_i}^{-1}{X_{\xi}(t_i-\xi)})\right)\Psi(\xi)~\d\xi~\d\P_X\right]
	\end{align*}
	Hence, with condition \eqref{eq:Xcond1}
	\begin{align*}
	\lim_{\tau\rightarrow 0}\ \P(\Pi(t_1)\leq y_1, \dots, \Pi(t_n)\leq y_n)=\E_{\Psi}\exp\left[-\int_{\X}\int_S \max_{1\leq i\leq n}\left({y_i}^{-1}X_{\xi}(t_i-\xi)\right)\Psi(\xi)~\d\xi~\d\P_X\right]
	\end{align*}
	which equals the finite-dimensional distribution of $Y$, see Remark 3 in \cite{d16}.
	It remains to prove the tightness of $\Pi$, that is
	$$\lim_{\delta\rightarrow 0}\limsup_{\tau\rightarrow 0} \P(\omega_K(\Pi,\delta)>\varepsilon)=0$$
	with an arbitrary compact set $K\subset S$ and
	$$\omega_K(\Pi,\delta)=\sup_{t_1,t_2\in K:\|t_1-t_2\|\leq\delta}\left|\Pi(t_1)-\Pi(t_2)\right|.$$
	This can be shown in the same way as in Theorem 7 of \cite{d16}.

\end{proof}
\begin{Theorem}
	Let the assumptions of Lemma~\ref{lemma:conv} hold true and let $\Psi$ be stationary with $\E\Psi(o)=1$.
	The Matérn extremal process $\Pi$ is in the max-domain of attraction of the mixed moving maxima process $Z$ given by Equation \eqref{def:M3}.	That is, if $\Pi_i$ are iid.\ copies of $\Pi$, the convergence 
	$$n^{-1}\bigvee_{i=1}^n\Pi_i\rightarrow Z,$$
	holds weakly in $C(S,\R)$.
\end{Theorem}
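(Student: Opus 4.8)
Weak convergence in $C(S,\R)$ (with the topology of locally uniform convergence) will follow from (i) convergence of the finite-dimensional distributions of $M_n:=n^{-1}\bigvee_{i=1}^n\Pi_i$ and (ii) tightness of $\{M_n\}$ on every compact $K\subset S$. Fix $t_1,\dots,t_k\in K$ and $y_1,\dots,y_k>0$. Since the $\Pi_i$ are iid, the elementary max identity gives $\P(M_n(t_j)\le y_j,\ j=1,\dots,k)=\big[\P(\Pi(t_j)\le n y_j,\ j=1,\dots,k)\big]^n$, and I would substitute the explicit finite-dimensional distribution of $\Pi$ obtained in the proof of Lemma~\ref{lemma:conv}. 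Writing $B(\xi,X):=\max_{1\le j\le k}\big(y_j^{-1}X_\xi(t_j-\xi)\big)$, the inner probability reads $\E_\Psi\exp(-I_n)$ with $I_n:=\int_\X\int_S\min\big(1/\tau,\,n^{-1}B(\xi,X)\big)\Psi(\xi)\,\d\xi\,\d\P_X$.

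\textbf{Finite-dimensional distributions.} The key step is to show $n\big(1-\E_\Psi e^{-I_n}\big)\to L$, where $L:=\int_\X\int_S\max_{1\le j\le k}\big(y_j^{-1}X(t_j-\xi)\big)\,\d\xi\,\d\P_X$; since $1-\E_\Psi e^{-I_n}\to0$, this yields $\big[\E_\Psi e^{-I_n}\big]^n\to e^{-L}$, which is exactly the finite-dimensional distribution of the M3 process $Z$ in \eqref{def:M3}. I would first observe that $nI_n=\int_\X\int_S\min(n/\tau,B)\Psi\,\d\xi\,\d\P_X$ increases monotonically to $J(\Psi):=\int_\X\int_S B(\xi,X)\Psi(\xi)\,\d\xi\,\d\P_X$; hence $I_n\to0$ and $n\big(1-e^{-I_n}\big)=nI_n\cdot I_n^{-1}\big(1-e^{-I_n}\big)\to J(\Psi)$ pointwise in $\Psi$. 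Because $n(1-e^{-I_n})\le nI_n\le J(\Psi)$ and, by Tonelli together with stationarity and $\E\Psi(o)=1$, $\E_\Psi J(\Psi)=\int_\X\int_S B(\xi,X)\,\d\xi\,\d\P_X=L$, which is finite by \eqref{eq:Xcond2} after bounding $B\le(\min_j y_j)^{-1}\sup_{t\in K}X(t-\xi)$, dominated convergence gives $\E_\Psi\big[n(1-e^{-I_n})\big]\to L$. Note that the cutoff $1/\tau$ disappears in the limit, which is why the fixed value of $\tau$ is immaterial and why averaging $\Psi$ against its mean $1$ turns the Cox-type exponent of $Y$ into the plain Lebesgue exponent of $Z$.

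\textbf{Tightness.} It remains to verify $\lim_{\delta\to0}\limsup_{n\to\infty}\P\big(\omega_K(M_n,\delta)>\varepsilon\big)=0$ for every compact $K$ and $\varepsilon>0$. With the superposition $\Sigma_n:=\bigcup_{i=1}^n\Phi_{\th}^{(i)}$ we have $\bigvee_{i=1}^n\Pi_i(t)=\sup_{(\xi,\mxi)\in\Sigma_n}\mxi(t)$, so that $|M_n(t_1)-M_n(t_2)|\le n^{-1}\sup_{(\xi,\mxi)\in\Sigma_n}U_\xi\,|X_\xi(t_1-\xi)-X_\xi(t_2-\xi)|$. I would then split the centres $\xi\in\Sigma_n$ into those near $K$ and those far from $K$: the monotone envelopes $g_\omega\le X\le f_\omega$ of \eqref{eq:Xmonotone} bound the contribution of distant centres uniformly (a far-away $\xi$ can produce an oscillation exceeding $\varepsilon n$ only if its mark $U_\xi$ is correspondingly large, an event whose first-moment bound is controlled through \eqref{eq:Xcond2}), while uniform continuity of the paths of $X$ on compacts forces the local contribution to vanish as $\delta\to0$. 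The normalization $n^{-1}$ compensates exactly for the $n$-fold intensity of $\Sigma_n$, so the bound is uniform in $n$. This estimate is carried out as in Theorem~7 of \cite{d16}, mirroring the tightness step already invoked for $\Pi$ in Lemma~\ref{lemma:conv}.

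\textbf{Main obstacle.} The finite-dimensional convergence is a controlled interchange of limits (monotone and dominated convergence, with stationarity entering only to evaluate the constant $L$), and I expect it to be essentially routine. The genuine difficulty is the tightness step: one must make the oscillation bound uniform in $n$ for the superposition $\Sigma_n$, and this is where the envelope condition \eqref{eq:Xmonotone} is essential in order to tame the rare but large contributions of distant, high-mark points.
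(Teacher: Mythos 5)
Your proposal is correct, and for the finite-dimensional distributions it takes a genuinely different (and more self-contained) route than the paper. The paper writes $\P(\Pi^{(n)}(t_1)\le z_1,\dots)=\prod_{i=1}^n\E_{\Psi_i}\exp(-\cdot)$, merges this into a single expectation over the superposed intensity $n^{-1}\sum_{i=1}^n\Psi_i$, and then passes to the limit by invoking the law of large numbers $n^{-1}\sum_i\Psi_i(\xi)\to\E\Psi(o)=1$ inside the exponent; the final interchange of limit and expectation (and of the LLN with the $\d\xi\,\d\P_X$ integral) is left implicit. You instead keep the $n$-th power of a single expectation, $\bigl[\E_\Psi e^{-I_n}\bigr]^n$, and prove $n(1-\E_\Psi e^{-I_n})\to L$ via the monotone limit $nI_n\uparrow J(\Psi)$ and dominated convergence with the integrable majorant $J(\Psi)$ (integrable precisely because of stationarity, $\E\Psi(o)=1$ and \eqref{eq:Xcond2}). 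Both arguments use the same two hypotheses at the same point --- to convert the Cox exponent of $Y$ into the Lebesgue exponent of $Z$ --- but your $(1-L/n+o(1/n))^n\to e^{-L}$ device avoids the LLN for random fields and the attendant uniform-integrability issues, so it is arguably the cleaner justification of the same computation. On tightness you do exactly what the paper does, namely defer to the argument of Theorem~7 of \cite{d16}; your sketch of why that argument transfers to the superposition $\Sigma_n$ (splitting near/far centres and using \eqref{eq:Xmonotone} against the $n^{-1}$ normalization) is consistent with that reference and adds useful detail, though like the paper you do not carry it out in full.
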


\begin{proof}
	Consider the sequence $\Pi^{(n)}=n^{-1}\bigvee_{i=1}^n \Pi_i$. We have to prove that $\Pi^{(n)}$ is tight, and that its marginal distributions converge to that of $Z$ which are given by
	$$\P(Z(t_1)\leq z_1, \dots, Z(t_m)\leq z_m)=\exp\left[-\int_{\X}\int_S \max_{1\leq i\leq n}\left({y_i}^{-1}X_{\xi}(t_i-\xi)\right)~\d\xi~\d\P_X\right].$$
	The tightness can be derived by similar arguments as in the proof of Theorem~7 in \cite{d16}. The finite-dimensional distributions of $\Pi^{(n)}$ are given by
	
	\begin{align*}
	&\ \P(\Pi^{(n)}(t_1)\leq z_1, \dots, \Pi^{(n)}(t_m)\leq z_m)\\
	=&\ \prod_{i=1}^n \E_{\Psi}\exp\left(-\int_{\X}\int_S\min\left(\frac{1}{\tau},\max_{1\leq j\leq m}\frac{X_{\xi}(t_j-\xi)}{nz_j}\right)\Psi_i(\xi)~\d\xi\d\P_{X_\xi}\right)\\
	=&\ \E_{\Psi}\exp\left(-\int_{\X}\int_S\min\left(\frac{n}{\tau},\max_{1\leq j\leq m}\frac{X_{\xi}(t_j-\xi)}{z_j}\right)n^{-1}\sum_{i=1}^n\Psi_i(\xi)~\d\xi\d\P_{X_\xi}\right).
	\end{align*}
	Since
	$$\limm \min\left(\frac{n}{\tau},\max_{1\leq j\leq m}\frac{X_{\xi}(t_j-\xi)}{z_j}\right)n^{-1}\sum_{i=1}^n\Psi_i(\xi)=\max_{1\leq j\leq m}\frac{X_{\xi}(t_j-\xi)}{z_j},$$
	we obtain 
	$$\limm \P(\Pi^{(n)}(t_1)\leq z_1, \dots, \Pi^{(n)}(t_m)\leq z_m)=\exp\left[-\int_{\X}\int_S \max_{1\leq i\leq n}\left({z_j}^{-1}X_{\xi}(t_j-\xi)\right)~\d\xi~\d\P_X\right].$$
\end{proof}

\subsection{Process of visible storm centres}

We now consider the thinning function

$$	{f}^{\ast}_{\th}(\Phi_M;\xi,\mxi)=\prod_{\xxi'\in\Phi_M}\left[1-\1_{u_{\xi'}>u_{\xi}X_{\xi}(0)X_{\xi'}(\xi-\xi')^{-1}}\right].$$
The process resulting from dependent thinning equals 
\begin{align*}
{\Phi}^{\ast}_{\th}=&\ \{(\xi,\mxi)\in\Phi_M:{f}^{\ast}_{\th}(\Phi_M;\xi,\mxi)=1\}\\
=&\ \{(\xi,\mxi)\in\Phi_{\th}:\Pi(\xi)=\mxi(\xi)\}.
\end{align*}
That is, a point $(\xi,\mxi)$ is retained if $\mxi(\xi)\geq\mxi'(\xi)$ for all other $(\xi',\mxi')$ in $\Phi_M$. This condition is sharper than \eqref{eq:thinExHardCore} in the preceding section where points $(\xi,\mxi)$ are retained if there is an arbitrary $t$ such that $\mxi(t)\geq\mxi'(t)$ for all other $(\xi',\mxi')$ - therefore ${\Phi}^{\ast}_{\th}\subset\Phi_{\th}$.

\begin{figure}
	\centering
	\begin{minipage}[b]{6.5 cm}
		\includegraphics[width=1\textwidth]{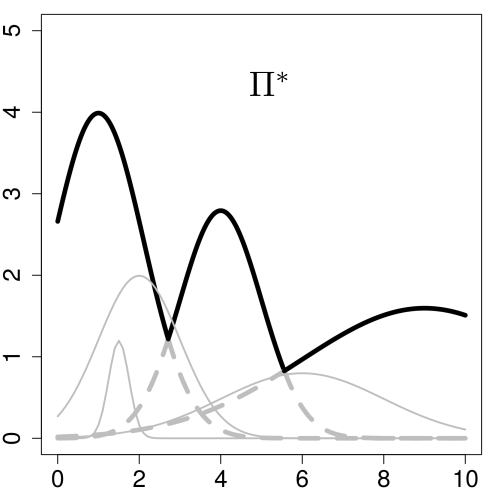}
	\end{minipage}
	\begin{minipage}[b]{6.5cm}
		\includegraphics[width=1\textwidth]{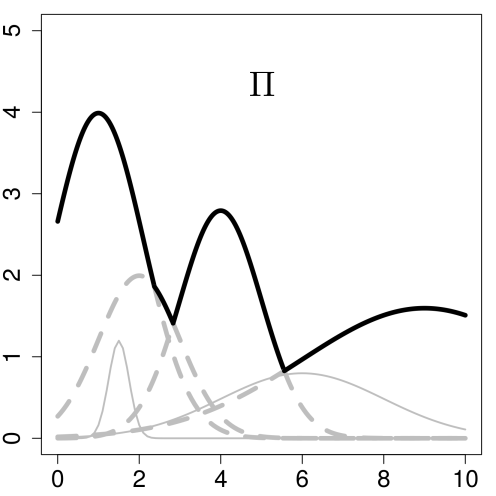}
	\end{minipage}
	\caption{The black solid lines form the final processes ${\Pi}^{\ast}$ (left) and $\Pi$ (right). The shape with centre $\xi=2$ does not contribute to ${\Pi}^{\ast}$, since its centre is covered by an other shape.}
	\label{fig:vis_storms}
\end{figure}

We introduce the \textit{process of visible storm centres} defined by
\begin{align}
\label{def:POVSC}
{\Pi}^{\ast}(t)=\bigvee_{(\xi,\mxi)\in{\Phi}^{\ast}_{\th}}\mxi(t).
\end{align}
This process is closely related to the extremal hard-core process $\Pi$ defined in \eqref{def:extremal_hardcore}, see also Figure~\ref{fig:vis_storms}.

\begin{Remark}
	The set ${\Phi}^{\ast}_{\th}$ is a subset of the set of extremal functions of $\Phi_M$ introduced by \cite{dombry2013contpoints}. If $\mxi$ is an extremal function which is not included in ${\Phi}^{\ast}_{\th}$, then ${\Pi}^{\ast}(\xi)>\mxi(\xi)$, i.e.\ the centre of $\mxi$ is covered by other storms, see Figure~\ref{fig:vis_storms}.
\end{Remark}

We apply Proposition~\ref{prop:FOI} and \ref{prop:soi_general} from Section~\ref{sec:coxmodel} to calculate first and second order properties of the thinned process $\Phi^{\ast}_{\th}$.
\begin{Lemma}
	Let $\Phi\sim\lgcp(\mu-\log\tau,C)$ with random intensity function $\tau^{-1}\Psi$.
	The intensity of ${\Phi}^{\ast}_{\th}$ is given by 
	
	$$\rho_{{\Phi}^{\ast}_{\th}}(\xi)=p_0\E\Psi(\xi)\int_{\X}X_{\xi}(o)\E_{\widetilde{\Psi}}\left[\frac{1-\exp(-\tau^{-1}X_{\xi}(0)^{-1}\cdot c_{\widetilde{\Psi}})}{c_{\widetilde{\Psi}}}\right]\d\P_{X_{\xi}},$$
	where
	$$c_{\widetilde{\Psi}}=\int_{S}\E_XX_{\xi'}(\xi-\xi')\widetilde{\Psi}(\xi')\d\xi'$$
	and $\tau^{-1}\widetilde{\Psi}$ is the random intensity function of $\widetilde{\Phi}\sim\lgcp(\widetilde{\mu}-\log\tau,C)$ and $\widetilde{\mu}(\cdot)=\mu(\cdot)+C(\cdot,\xi)$.
\end{Lemma}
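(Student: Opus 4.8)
The plan is to specialise Proposition~\ref{prop:FOI} to the thinning rule defining ${f}^{\ast}_{\th}$. Here the product ``competition times deletion probability'' is the indicator
$$1-h(\xxi,\xxi')=\zeta(\xxi,\xxi')p(\xxi,\xxi')=\1_{u_{\xi'}X_{\xi'}(\xi-\xi')>u_{\xi}X_{\xi}(0)},$$
while the diagonal convention $\zeta(\xxi,\xxi)p(\xxi,\xxi)=1-p_0$ supplies the leading factor $p_0$. By Proposition~\ref{prop:FOI} the reduced Palm process is $\widetilde{\Phi}\sim\lgcp(\widetilde{\mu}-\log\tau,C)$ with $\widetilde{\mu}(\cdot)=\mu(\cdot)+C(\cdot,\xi)$ and random intensity $\tau^{-1}\widetilde{\Psi}$, and the intensity of $\Phi$ is $\rho_{\Phi}(\xi)=\tau^{-1}\E\Psi(\xi)$. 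Since each mark is the pair $(U_{\xi},X_{\xi})$, the mark law factorises as $\nu(\d m_{\xi})=\tau u_{\xi}^{-2}\1_{(\tau,\infty)}(u_{\xi})\,\d u_{\xi}\,\d\P_{X_{\xi}}$, and likewise for $m_{\xi'}$; I would substitute all of this into the intensity formula~\eqref{eq:FOI}.

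First I would evaluate the inner integral over the competitor mark $m_{\xi'}$. Integrating the indicator against the truncated law of $u_{\xi'}$ yields
$$\int_{\tau}^{\infty}\1_{u_{\xi'}>u_{\xi}X_{\xi}(0)/X_{\xi'}(\xi-\xi')}\,\tau u_{\xi'}^{-2}\,\d u_{\xi'}=\min\!\left(1,\ \frac{\tau X_{\xi'}(\xi-\xi')}{u_{\xi}X_{\xi}(0)}\right).$$
The crux is to show that the minimum is attained by its second argument, i.e.\ that the deletion threshold $u_{\xi}X_{\xi}(0)/X_{\xi'}(\xi-\xi')$ never drops below the truncation level $\tau$. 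This is the step that invokes assumption~\eqref{eq:Xmonotone}, which forces $X(0)=\sup_{t}X(t)$, together with $u_{\xi}>\tau$. Granting this, the factors of $\tau$ cancel against the intensity $\tau^{-1}\widetilde{\Psi}$, and integrating the remaining $X_{\xi'}(\xi-\xi')$ against $\d\P_{X_{\xi'}}$ and then against $\widetilde{\Psi}(\xi')\,\d\xi'$ collapses the whole inner integral to $c_{\widetilde{\Psi}}/(u_{\xi}X_{\xi}(0))$, with $c_{\widetilde{\Psi}}$ as defined in the statement.

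It then remains to integrate $\exp(-c_{\widetilde{\Psi}}/(u_{\xi}X_{\xi}(0)))$ over the mark of the focal point. For the $u_{\xi}$-integral I would substitute $w=1/u_{\xi}$, turning $\int_{\tau}^{\infty}(\cdot)\,\tau u_{\xi}^{-2}\,\d u_{\xi}$ into $\tau\int_{0}^{1/\tau}\exp(-c_{\widetilde{\Psi}}w/X_{\xi}(0))\,\d w$, which evaluates in closed form to $\tau X_{\xi}(0)c_{\widetilde{\Psi}}^{-1}\bigl(1-\exp(-\tau^{-1}X_{\xi}(0)^{-1}c_{\widetilde{\Psi}})\bigr)$. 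Taking $\E_{\widetilde{\Psi}}$, multiplying by the prefactor $p_0\rho_{\Phi}(\xi)=p_0\tau^{-1}\E\Psi(\xi)$ so that the two powers of $\tau$ cancel, and finally integrating over $X_{\xi}$ against $\d\P_{X_{\xi}}$ reproduces the claimed expression, using $X_{\xi}(0)=X_{\xi}(o)$.

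The main obstacle is precisely the truncation step: outside the regime where the threshold exceeds $\tau$ the competitor mass would be capped at $1$, and the clean linear dependence on $X_{\xi'}(\xi-\xi')$ — hence the closed form and the appearance of $c_{\widetilde{\Psi}}$ — would be lost, so~\eqref{eq:Xmonotone} must be used carefully to establish $u_{\xi}X_{\xi}(0)\ge\tau X_{\xi'}(\xi-\xi')$ on the relevant range. The remaining points are routine: I would invoke~\eqref{eq:Xcond1} to guarantee $c_{\widetilde{\Psi}}<\infty$ almost surely and to justify the Fubini interchanges and the generating-functional identity underlying Proposition~\ref{prop:FOI}, noting that all integrands are non-negative and measurable, so the order of integration over $u_{\xi'},X_{\xi'},\xi',u_{\xi},X_{\xi}$ and $\widetilde{\Psi}$ may be chosen freely.
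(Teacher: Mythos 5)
Your proposal is correct and follows essentially the same route as the paper's proof: specialise Proposition~\ref{prop:FOI}, use \eqref{eq:Xmonotone} to replace the truncation level $\tau$ by the deletion threshold in the competitor-mark integral so that it collapses to $c_{\widetilde{\Psi}}/(u_{\xi}X_{\xi}(0))$, and then evaluate the remaining $u_{\xi}$-integral in closed form via $w=1/u_{\xi}$. The only differences are presentational (writing the competitor integral as a $\min$ before resolving it, and making the Fubini and finiteness justifications explicit).
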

\begin{proof}
	
	The retaining probability can be calculated by
	
	\begin{align*}
	r(\xi,\mxi)=&\ \E_{\widetilde{\Psi}}\exp\left(-\int_{S}\int_{\M}\zeta(\xxi,\xxi')p(\xxi,\xxi')\tau^{-1}\widetilde{\Psi}(\xi')\mu(dm_{\xi'})\d\xi'\right)\\
	=&\ \E_{\widetilde{\Psi}}\exp\left(-\int_{S}\int_{\M}\1_{u_{\xi'}>u_{\xi}X_{\xi}(0)X_{\xi'}(\xi-\xi')^{-1}}\tau^{-1}\widetilde{\Psi}(\xi')\mu(dm_{\xi'})\d\xi'\right)\\
	=&\ \E_{\widetilde{\Psi}}\exp\left(-\int_S\int_{\X}\int_{\tau}^{\infty}\1_{u_{\xi'}>u_{\xi}X_{\xi}(0)X_{\xi'}(\xi-\xi')^{-1}}\widetilde{\Psi}(\xi')u_{\xi'}^{-2}\d u_{\xi'}\d\P_{X_{\xi'}}\d\xi'\right).\\
	\end{align*}
	
	Due to the condition \eqref{eq:Xmonotone}, $u_{\xi}X_{\xi}(0)X_{\xi'}(\xi-\xi')^{-1}\geq\tau$ and therefore
	\begin{align*}
	r(\xi,\mxi)=&\ \E_{\widetilde{\Psi}}\exp\left(-\int_{\X}\int_S\int_{u_{\xi}X_{\xi}(0)X_{\xi'}(\xi-\xi')^{-1}}^{\infty}u_{\xi'}^{-2}\widetilde{\Psi}(\xi')\d u_{\xi'}\d\P_{X_{\xi'}}\d\xi'\right)\\	
	=&\ \E_{\widetilde{\Psi}}\exp\left(-\int_{\X}\int_{S} \frac{X_{\xi'}(\xi-\xi')}{u_{\xi}X_{\xi}(0)}\widetilde{\Psi}(\xi')\d\xi'\d\P_{X_{\xi'}}\right).
	\end{align*}
	The intensity then equals
	\begin{align*}
	\rho_{\Phi_{\th}}(\xi)=&\ p_0\rho_{\Phi}(\xi)\int_{\M}\E_{\widetilde{\Psi}}\exp\left(-\int_{\X}\int_{S} \frac{X_{\xi'}(\xi-\xi')}{u_{\xi}X_{\xi}(0)}\widetilde{\Psi}(\xi')\d\xi'\d\P_{X_{\xi'}}\right)\mu(\d m_{\xi})\\
	=&\ p_0\E\Psi(\xi)\int_{\X}\int_{\tau}^{\infty}\E_{\widetilde{\Psi}}\exp\left(-u^{-1}X_{\xi}(0)^{-1}c_{\widetilde{\Psi},X_{\xi}}\right)u^{-2}\d u\d\P_{X_{\xi}},\\
	\end{align*}
	with $c_{\widetilde{\Psi}}=\int_{\X}\int_{S}X_{\xi'}(\xi-\xi')\widetilde{\Psi}(\xi')\d\xi'\d\P_{X_{\xi'}}$. By calculating the integral with respect to $u$ we finally obtain
	
	\begin{align*}
	\rho_{\Phi_{\th}}(\xi)=&\ p_0\E\Psi(\xi)\int_{\X}X_{\xi}(0)\E_{\widetilde{\Psi}}\left[\frac{1-\exp(-\tau^{-1}X_{\xi}(0)^{-1}\cdot c_{\widetilde{\Psi}})}{c_{\widetilde{\Psi}}}\right]\d\P_{X_{\xi}}.
	\end{align*}
\end{proof}

\begin{Lemma}
	Let $\Phi\sim\lgcp(\mu-\log\tau^{-1},C)$ with random intensity function $\Psi$. The second order intensity of ${\Phi}^{\ast}_{\th}$ equals
	\begin{align*}
	\rho_{{\Phi}^{\ast}_{\th}}^{(2)}(\xi,\eta)=p_0^2\E[\Psi(\xi)\Psi(\eta)]\int_{\X}\int_{\X}\bigg[&\int_{\tau}^{\infty}\int_{\tau}^{\frac{u_{\eta}X_{\eta}(0)}{X_{\xi}(\eta-\xi)}}r(\xxi)r(\eeta)r(\xxi,\eeta)u_{\xi}^{-2}u_{\eta}^{-2}~\d u_{\xi}~\d u_{\eta}\\
	-&\int_{\tau}^{\infty}\int_{\frac{u_{\xi}X_{\xi}(0)}{X_{\eta}(\xi-\eta)}}^{\infty}r(\xxi)r(\eeta)r(\xxi,\eeta)u_{\xi}^{-2}u_{\eta}^{-2}~\d u_{\eta}~\d u_{\xi}\bigg]\d\P_{X_{\xi}}\d\P_{X_{\eta}}
	\end{align*}
	with $$r(\xxi)=\E_{\widetilde{\Psi}}\exp\left(-\int_{\X}\int_{S} \frac{X_{\xi'}(\xi-\xi')}{u_{\xi}X_{\xi}(0)}\widetilde{\Psi}(\xi')\d\xi'\d\P_{X_{\xi'}}\right)$$ and
	$$r(\xxi,\eeta)=\E_{\widetilde{\Psi}} \exp\bigg(\int_X\int_S \min\left(\frac{X_{\xi'}(\xi-\xi')}{u_{\xi}X_{\xi}(0)},\frac{X_{\xi'}(\eta-\xi')}{u_{\eta}X_{\eta}(0)}\right)\widetilde{\Psi}(\xi')\d\xi'\d\P_{X_{\xi'}}\bigg).$$
	Here  $\tau^{-1}\widetilde{\Psi}$ is the random intensity function of $\widetilde{\Phi}\sim\lgcp(\widetilde{\mu}-\log\tau^{-1},C)$ and $\widetilde{\mu}(\cdot)=\mu(\cdot)+C(\cdot,\xi)+C(\cdot,\eta)$.
	
\end{Lemma}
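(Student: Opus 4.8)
The plan is to apply Proposition~\ref{prop:soi_general} to the present model, in which $p\equiv 1$ apart from the built-in $p_0$-self-thinning and the competition function is read off from $f^{\ast}_{\th}$ as $\zeta(\xxi,\xxi')=\1_{u_{\xi'}>u_{\xi}X_{\xi}(0)X_{\xi'}(\xi-\xi')^{-1}}$, so that $h(\xxi,\xxi')=1-\zeta(\xxi,\xxi')$. Proposition~\ref{prop:soi_general} then writes $\rho^{(2)}_{{\Phi}^{\ast}_{\th}}(\xi,\eta)$ as $p_0^2\rho^{(2)}_{\Phi}(\xi,\eta)$ times a double mark-integral of a \emph{prefactor} $h(\xxi,\eeta)h(\eeta,\xxi)$ against an \emph{interaction factor} $\E_{\widetilde{\Psi}}\exp(-\int_{S\times\M}(1-h(\xxi,\xxi')h(\eeta,\xxi'))\widetilde{\Psi}(\xi')\,\d\xi'\,\nu(\d m_{\xi'}))$, where $\widetilde{\Psi}$ carries the two-point Palm mean $\widetilde{\mu}=\mu+C(\cdot,\xi)+C(\cdot,\eta)$. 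The two quantities to evaluate are thus this interaction factor and the prefactor; the outer mark-integrals over $m_{\xi}=(u_{\xi},X_{\xi})$ and $m_{\eta}=(u_{\eta},X_{\eta})$ are then carried out exactly as in the preceding intensity lemma, by splitting $\nu$ into the law $\tau u^{-2}\1_{(\tau,\infty)}\d u$ of $U$ and $\d\P_{X}$, with the $\tau$-factors combining against the Palm intensity $\tau^{-1}\widetilde{\Psi}$.

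For the interaction factor I would first apply inclusion--exclusion to the two indicators: since $1-h(\xxi,\xxi')h(\eeta,\xxi')=\zeta(\xxi,\xxi')+\zeta(\eeta,\xxi')-\zeta(\xxi,\xxi')\zeta(\eeta,\xxi')=\1_{u_{\xi'}>\min(a,b)}$ with $a=u_{\xi}X_{\xi}(0)X_{\xi'}(\xi-\xi')^{-1}$ and $b=u_{\eta}X_{\eta}(0)X_{\xi'}(\eta-\xi')^{-1}$, the $u_{\xi'}$-integral against $u_{\xi'}^{-2}\,\d u_{\xi'}$ reduces---just as in that lemma, where \eqref{eq:Xmonotone} guarantees $a,b\ge\tau$ so the lower cut-off may be dropped---to $1/\min(a,b)=\max(1/a,1/b)$. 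The exponent thereby becomes $-\int_{\X}\int_{S}\max\bigl(\tfrac{X_{\xi'}(\xi-\xi')}{u_{\xi}X_{\xi}(0)},\,\tfrac{X_{\xi'}(\eta-\xi')}{u_{\eta}X_{\eta}(0)}\bigr)\widetilde{\Psi}(\xi')\,\d\xi'\,\d\P_{X_{\xi'}}$. Rewriting $\max(c,d)=c+d-\min(c,d)$ splits this into three pieces, and under the \emph{common} Palm field $\widetilde{\Psi}$ the resulting exponentials assemble into the product-form $r(\xxi)\,r(\eeta)\,r(\xxi,\eeta)$ recorded in the statement, the sign change on the $\min$-term producing exactly the $+$ sign in the definition of $r(\xxi,\eeta)$.

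It remains to treat the prefactor and the outer $u$-integrals. Unwinding the indicators gives $h(\xxi,\eeta)h(\eeta,\xxi)=\1_{A}\1_{B}$ with $A=\{u_{\eta}\le u_{\xi}X_{\xi}(0)X_{\eta}(\xi-\eta)^{-1}\}$ and $B=\{u_{\xi}\le u_{\eta}X_{\eta}(0)X_{\xi}(\eta-\xi)^{-1}\}$, i.e.\ the event that neither point is dominated at the other's centre (mutual visibility). Integrating $X_{\xi},X_{\eta}$ against $\d\P_{X_{\xi}}\d\P_{X_{\eta}}$ and the $u$'s against $u_{\xi}^{-2}u_{\eta}^{-2}\,\d u_{\xi}\,\d u_{\eta}$ over $(\tau,\infty)^2$ leaves the region $A\cap B$. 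Here \eqref{eq:Xmonotone} forces $X_{\eta}(\xi-\eta)\le X_{\eta}(0)$ and $X_{\xi}(\eta-\xi)\le X_{\xi}(0)$, so $X_{\eta}(\xi-\eta)X_{\xi}(\eta-\xi)\le X_{\eta}(0)X_{\xi}(0)$ and hence $A^{c}\subseteq B$; using $\1_{A\cap B}=\1_{B}-\1_{A^{c}}$ then rewrites the region as the difference of the two iterated integrals $\int_{\tau}^{\infty}\int_{\tau}^{u_{\eta}X_{\eta}(0)X_{\xi}(\eta-\xi)^{-1}}\d u_{\xi}\,\d u_{\eta}$ and $\int_{\tau}^{\infty}\int_{u_{\xi}X_{\xi}(0)X_{\eta}(\xi-\eta)^{-1}}^{\infty}\d u_{\eta}\,\d u_{\xi}$ appearing in the lemma. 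Collecting constants via $p_{0}^{2}\rho^{(2)}_{\Phi}(\xi,\eta)=p_{0}^{2}\E[\Psi(\xi)\Psi(\eta)]$ for the log Gaussian Cox process then gives the claimed formula.

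The main obstacle is the interaction factor: one must (i) perform the inclusion--exclusion and the $\max=\text{sum}-\min$ rearrangement cleanly, keeping all three resulting exponentials under one and the same Palm field $\widetilde{\Psi}$ so that $r(\xxi)\,r(\eeta)\,r(\xxi,\eeta)$ is read off correctly rather than as a product of unrelated expectations, and (ii) justify dropping the $\tau$ cut-offs through \eqref{eq:Xmonotone}. A secondary but essential subtlety is the region reorganization $\1_{A\cap B}=\1_{B}-\1_{A^{c}}$, which is legitimate only because monotonicity yields $A^{c}\subseteq B$; without this containment the asymmetric two-integral form would be incorrect. The remaining $\tau$- and $p_{0}$-bookkeeping is routine once the two-point Palm distribution of the log Gaussian Cox process has been substituted via Proposition~\ref{prop:soi_general}.
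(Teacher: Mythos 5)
Your proposal follows the paper's proof essentially step for step: apply Proposition~\ref{prop:soi_general} with the two-point Palm distribution, use inclusion--exclusion and the $u^{-2}$-integration (dropping the $\tau$-cutoff via \eqref{eq:Xmonotone}) to arrive at $r(\xxi)r(\eeta)r(\xxi,\eeta)$, and observe that monotonicity eliminates the product of the two domination indicators --- your containment $A^{c}\subseteq B$ is exactly the paper's statement that the intersection of the two domination events is empty --- which yields the same two-region decomposition of the $u_{\xi},u_{\eta}$-integrals. The approach and all key steps coincide with the paper's own argument.
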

\begin{proof}
	
	Due to Proposition~\ref{prop:soi_general}
	\begin{align*}\rho_{\mathrm{th}}^{(2)}(\xi,\eta)=\rho^{(2)}_{\Phi}(\xi,\eta)p_0^2&\int_{\M}\int_{\M}h(\xxi,\eeta)h(\eeta,\xxi)\\
	&\E_{\widetilde{\Psi}} \exp\left(-\int_{S\times\M} \big(1-h(\xxi,\xxi')h(\eeta,\xxi')\big)\widetilde{\Psi}(\xi')\d\xi'\nu(\d\mxi')\right)\nu(\d\mxi)\nu(\d\meta).
	\end{align*}
	Then
	\begin{align*}
	&\ \E_{\widetilde{\Psi}} \exp\left(-\int_{S\times\M} \big(1-h(\xxi,\xxi')h(\eeta,\xxi')\big)\tau^{-1}\widetilde{\Psi}(\xi')\d\xi'\nu(\d\mxi')\right)\\
	=&\ \E_{\widetilde{\Psi}} \exp\bigg(-\int_X\int_S\int_{\tau}^{\infty} [\1_{u>u_{\xi}X_{\xi}(0)(X_{\xi'}(\xi-\xi'))^{-1}}+\1_{u>u_{\eta}X_{\eta}(0)(X_{\xi'}(\eta-\xi'))^{-1}}\\
	&-\1_{u>u_{\xi}X_{\xi}(0)(X_{\xi'}(\xi-\xi'))^{-1}}\1_{u>u_{\eta}X_{\eta}(0)(X_{\xi'}(\eta-\xi'))^{-1}}]\widetilde{\Psi}(\xi')u^{-2}~\d u\d\xi'\d\P_{X_{\xi'}}\bigg)\\
	=&\ r(\xxi)r(\eeta)\E_{\widetilde{\Psi}} \exp\bigg(-\int_X\int_S\int_{\tau}^{\infty}\1_{u>\max[u_{\xi}X_{\xi}(0)(X_{\xi'}(\xi-\xi'))^{-1},u_{\eta}X_{\eta}(0)(X_{\xi'}(\eta-\xi'))^{-1}]}\widetilde{\Psi}(\xi')u^{-2}~\d u\d\xi'\d\P_{X_{\xi'}}\bigg)\\		
	=&\ r(\xxi)r(\eeta)\E_{\widetilde{\Psi}} \exp\bigg(\int_X\int_S \min\left(\frac{X_{\xi'}(\xi-\xi')}{u_{\xi}X_{\xi}(0)},\frac{X_{\xi'}(\eta-\xi')}{u_{\eta}X_{\eta}(0)}\right)\widetilde{\Psi}(\xi')\d\xi'\d\P_{X_{\xi'}}\bigg)
	\end{align*}
	with $r(\xxi)=\E_{\widetilde{\Psi}}\exp\left(-\int_{\X}\int_{S} \frac{X_{\xi'}(\xi-\xi')}{u_{\xi}X_{\xi}(0)}\widetilde{\Psi}(\xi')\d\xi'\d\P_{X_{\xi'}}\right)$.  
	Furthermore
	\begin{align*}
	h(\xxi,\eeta)h(\eeta,\xxi)=1&-\1_{u_{\eta}>u_{\xi}X_{\xi}(0)X_{\eta}(\xi-\eta)^{-1}}-\1_{u_{\xi}>u_{\eta}X_{\eta}(0)X_{\xi}(\eta-\xi)^{-1}}\\
	&+\1_{u_{\eta}>u_{\xi}X_{\xi}(0)X_{\eta}(\xi-\eta)^{-1}}\1_{u_{\xi}>u_{\eta}X_{\eta}(0)X_{\xi}(\eta-\xi)^{-1}}\\
	=1&-\1_{u_{\eta}>u_{\xi}X_{\xi}(0)X_{\eta}(\xi-\eta)^{-1}}-\1_{u_{\xi}>u_{\eta}X_{\eta}(0)X_{\xi}(\eta-\xi)^{-1}}
	\end{align*}
	and 
	\begin{align*}
	\rho_{{\Phi}^{\ast}_{\th}}^{(2)}(\xi,\eta)=p_0^2\E[\Psi(\xi)\Psi(\eta)]\bigg[\int_{\X}\int_{\tau}^{\infty}&\int_{\X}\int_{\tau}^{\infty}r(\xxi)r(\eeta)r(\xxi,\eeta)u_{\xi}^{-2}u_{\eta}^{-2}~\d u_{\xi}~\d u_{\eta}\d\P_{X_{\xi}}\d\P_{X_{\eta}}\\
	-\int_{\X}\int_{\tau}^{\infty}&\int_{\X}\int_{\frac{u_{\eta}X_{\eta}(0)}{X_{\xi}(\eta-\xi)}}^{\infty}r(\xxi)r(\eeta)r(\xxi,\eeta)u_{\xi}^{-2}u_{\eta}^{-2}~\d u_{\xi}\d\P_{X_{\xi}}~\d u_{\eta}\d\P_{X_{\eta}}\\
	-\int_{\X}\int_{\tau}^{\infty}&\int_{\X}\int_{\frac{u_{\xi}X_{\xi}(0)}{X_{\eta}(\xi-\eta)}}^{\infty}r(\xxi)r(\eeta)r(\xxi,\eeta)u_{\xi}^{-2}u_{\eta}^{-2}~\d u_{\eta}\d\P_{X_{\eta}}~\d u_{\xi}\d\P_{X_{\xi}}\bigg]\\
	=p_0^2\E[\Psi(\xi)\Psi(\eta)]\int_{\X}\int_{\X}\bigg[&\int_{\tau}^{\infty}\int_{\tau}^{\frac{u_{\eta}X_{\eta}(0)}{X_{\xi}(\eta-\xi)}}r(\xxi)r(\eeta)r(\xxi,\eeta)u_{\xi}^{-2}u_{\eta}^{-2}~\d u_{\xi}~\d u_{\eta}\\
	-&\int_{\tau}^{\infty}\int_{\frac{u_{\xi}X_{\xi}(0)}{X_{\eta}(\xi-\eta)}}^{\infty}r(\xxi)r(\eeta)r(\xxi,\eeta)u_{\xi}^{-2}u_{\eta}^{-2}~\d u_{\eta}~\d u_{\xi}\bigg]\d\P_{X_{\xi}}\d\P_{X_{\eta}}.
	\end{align*}
\end{proof}

\begin{Remark}
	Let $Z$ be the classical Smith model \citep{smith1990maxstable} in $\R^2$, that is $X$ is the density of the two-dimensional standard-normal distribution. Then, the intensity of the process of visible storm centres of $Z$ can be calculated as
	$$\lim_{\tau\rightarrow 0}\rho_{{\Phi}^{\ast}_{\th}}(\xi)=X_{\xi}(o)\left[{\int_{S}X_{\xi'}(\xi-\xi')\d\xi'}\right]^{-1}=(2\pi)^{-1}.$$
	In general, $\rho_{{\Phi}^{\ast}_{\th}}$ and $\rho_{{\Phi}^{\ast}_{\th}}^{(2)}$ cannot be explicitly calculated if $\Psi$ is random. However, numerical calculation of $\rho_{{\Phi}^{\ast}_{\th}}$ is feasible in most cases. For certain choices of $X$, e.g.\ $X(t)=(1-t^2)_+$ in $\R$, even $\rho_{{\Phi}^{\ast}_{\th}}^{(2)}$ is numerically tractable.
\end{Remark}

\vspace{5mm}


{\small
	\textbf{Acknowledgments.}
The research of the first author was partly supported by the DFG through 'RTG 1953 - Statistical Modeling of Complex Systems and Processes' and Volkswagen Stiftung within the project 'Mesoscale Weather Extremes - Theory, Spatial Modeling and Prediction'. The authors are grateful to M.~Kroll and K.~Strokorb for valuable comments and suggestions.
}
\bibliography{Literatur}
\bibliographystyle{plainnat}
\end{document}